\newcommand{\field}[1]{\mathbb{#1}}
\newcommand {\R}{\mathbb{R}}
\newcommand {\N}{\mathbb{N}}
\newcommand{\W}{\mathfrak{W}}
\newcommand {\Z}{\mathbb{Z}}
\newcommand{\NN}{\field{N}}
\newcommand{\bgln}{\begin{eqnarray}} 
\newcommand{\egln}{\end{eqnarray}}
\newcommand{\bgl}{\begin{equation}} 
\newcommand{\egl}{\end{equation}}
\newcommand{\alf}{\mathscr{A}}
\newcommand{\osf}{{\normalfont \textsf{X}}}
\newcommand{\CZ}{\mathcal{Z}}
\newcommand{\Dom}{\operatorname{Dom}}
\newcommand{\Img}{\operatorname{Im}}
\newtheorem{teorema}{theorem}[section]
\newtheorem{lemma}[teorema]{Lemma}
\newtheorem{corollary}[teorema]{Corollary}
\newtheorem{definition}[teorema]{Definition}
\newtheorem{proposition}[teorema]{Proposition}
\newtheorem{example}[teorema]{Example}
\theoremstyle{remark}
\theoremstyle{definition}
\newtheorem{remark}[teorema]{Remark}
\newtheorem{theorem}[teorema]{Theorem}
\title{Nonwandering sets and the entropy of local homeomorphisms}
\author{Daniel Gon\c{c}alves\footnote{Partially supported by Conselho Nacional de Desenvolvimento Cient\'ifico e Tecnol\'ogico (CNPq) and Funda\c{c}\~ao de Amparo \`a Pesquisa e Inova\c{c}\~ao do Estado de Santa Catarina (Fapesc) - Brazil.}, Danilo Royer, and Felipe Augusto Tasca\footnote{Partially supported by Funda\c{c}\~ao de Amparo \`a Pesquisa e Inova\c{c}\~ao do Estado de Santa Catarina (Fapesc) - Brazil.}}
\date{September 2024}
\begin{document}
\maketitle

\begin{abstract}

A local homeomorphism between open subsets of a locally compact Hausdorff space induces dynamical systems with a wide range of applications, including in C*-algebras. In this paper, we introduce the concepts of nonwandering and wandering sets for such systems and show that, under mild conditions, the metric entropy defined in \cite{DDF} is concentrated in the nonwandering set. More generally, we demonstrate that the entropy of the system is the maximum of the entropies of the systems restricted to the nonwandering set and the closure of the wandering set. We illustrate these results with several examples, including applications to subshifts over countable alphabets.

\vspace{5mm}

\thanks{\noindent \textbf{MSC2020} - Primary: 37B40. Secondary: 37B10, 37A35, 37A55, 54C70.}
\newline
\textbf{Keywords}: Local homeomorphism, nonwandering set, Deaconu-Renault system, entropy. 
\end{abstract}

\section{Introduction}

Systems consisting of a locally compact Hausdorff space $X$, along with a local homeomorphism $\sigma$ between open subsets of $X$, have been extensively studied. These systems capture the intrinsic dynamics found in various contexts, including one-sided shifts of finite type over finite alphabets, edge shifts of infinite graphs and ultragraphs \cite{MR3600124,GDD2,MR3938320,BrixCarlsen1}, the full Ott-Tomforde-Willis shift over an infinite alphabet \cite{OTW,BGGV,BGGV3}, and self-covering maps \cite{armstrong2023conjugacy}, among others (see \cite{MR1814145, BACS} for more examples). When the local homeomorphism is surjective, such systems can be characterized in terms of separated graphs, see \cite{clara}.

The study of local homeomorphism systems, as described above, is deeply connected to C*-algebra theory. Deaconu and Renault demonstrated in \cite{Deaconu95} how to associate a C*-algebra with such systems, leading to the term "Deaconu-Renault system" to describe a system consisting of a locally compact Hausdorff space $X$ together with a local homeomorphism $\sigma$ between open subsets of $X$. These associated C*-algebras have been shown to capture various properties of the system. In particular, conjugacy of Deaconu-Renault systems is characterized in terms of the corresponding C*-algebras, see \cite{BACS}.

Entropy is one of the most well-known invariants for isomorphism of dynamical systems and has been extensively studied across various contexts. In the setting of Deaconu-Renault systems, the notion of entropy, both metric and topological, was introduced by the authors in \cite{DDF}, where it was shown to also serve as an invariant for conjugacy in these systems. However, because the map in Deaconu-Renault systems is only locally a homeomorphism, not all classical results about entropy have direct analogs, and those that do must be carefully extended and analyzed. In this context, the goal of this paper is to extend the concepts of wandering and nonwandering points to Deaconu-Renault systems and to explore the relationship between these sets and entropy, aiming for a generalization of the classical result that states that the entropy of a system is concentrated in the nonwandering set, see \cite{bowen1970topological, entropypartialaction}.

In the study of a homeomorphism on a compact set $(\mathcal{X}, \phi)$, the wandering set is defined as the set of points for which there exists an open neighborhood $U$ such that $\phi^n(U) \cap U = \emptyset$ for every $n \in \mathbb{N}$. This condition models the idea that a point has a neighborhood whose future never returns to it. A point is said to be nonwandering if it is not wandering. In this case, both the nonwandering and wandering sets are invariant under the homeomorphism $\phi$.

In the local homeomorphism setting, we must be more cautious. For example, consider the one-sided shift operator defined on the discrete shift space with two points $X = \{01^\infty, 1^\infty\}$. Here, $01^\infty$ is wandering and $1^\infty$ is nonwandering, so the wandering set is not invariant under the shift map. 
To address this issue, we propose a refined definition of a wandering point. We require the existence of a neighborhood whose future never intersects any possible past of a future of it. More precisely, we say that $x$ is wandering if there exists an open neighborhood $U$ of $x$ such that $\sigma^n(U) \cap \sigma^{-k}(\sigma^k(U)) = \emptyset$ for all $n \geq 1$ and $k \in \mathbb{Z}$ (see Definition~\ref{wandering}). With this definition, the nonwandering and wandering sets are invariant under the local homeomorphism (Proposition~\ref{invariantshallbe}), a property that allows us to consider the restriction of the local homeomorphism to the (non)wandering set and compute the entropy of the restricted system.

We organize our work as follows: Section 2 is divided into two parts. In the first part, we review preliminary concepts related to Deaconu-Renault systems and introduce conditions under which a restricted system is again a Deaconu-Renault system. In the second part, we explain how to compute the metric entropy of these systems, as described in \cite{DDF}. 

Section 3 focuses on the set of nonwandering points. We formally define wandering and nonwandering points in singly generated dynamical systems, and present several examples, including the nonwandering set associated with the local homeomorphism arising from the construction of the Sierpiski triangle as an iterated function system. We also describe the nonwandering set for subshifts over countable alphabets.

Finally, in Section 4, we show that, in general, the entropy of a Deaconu-Renault system is the maximum of the entropies of the systems restricted to the nonwandering set and the closure of the wandering set. Additionally, for systems defined on a compact space with a clopen domain, we prove that the entropy is concentrated in the nonwandering set. We conclude by applying these results to compute the entropy of several examples.

\section{Preliminaries}\label{sec2}

In this section, we recall and develop necessary concepts regarding Deaconu-Renault systems and their metric entropy. Throughout our work, we denote the set of natural numbers $\{1,2,\ldots\}$ by $\NN$.

\subsection{Restricted Deaconu-Renault systems}

Deaconu-Renault systems, also known as singly generated dynamical systems, were originally defined in \cite{Renault00}. In this section, we describe when a restriction of a Deaconu-Renault system is again a Deaconu-Renault system. We begin with the definition of a Deaconu-Renault system, as given in \cite{BACS}.

\begin{definition} A Deaconu-Renault system is a pair $(X,\sigma)$ consisting of a locally compact Hausdorff space $X$, and a local homeomorphism $\sigma: \Dom(\sigma)\longrightarrow \operatorname{Im}(\sigma)$ from an open set $\Dom (\sigma)\subseteq X$ to an open set $\operatorname{Im}(\sigma)\subseteq X$. 
\end{definition}

\begin{remark}\label{intersection notation}
    Let $(X,\sigma)$ be a Deaconu-Renault system and $Y$ a subset of $X$ which is not necessarily contained in $\Dom(\sigma)$. To shorten notation, we write $\sigma(Y)$ instead of $\sigma(Y\cap \Dom(\sigma))$. Similarly, for each $n\geq 1$, we write $\sigma^n(Y)$ meaning $\sigma^n(Y\cap \Dom(\sigma^n))$. For $k<0$, the notation $\sigma^k(Y)$ is the inverse image $\sigma^{k}(Y)=\{x\in \Dom(\sigma^{-k}): \sigma^{-k}(x)\in Y \}$. Additionally, instead of writing {\it Deaconu-Renault system}, we will now simply refer to it as a {\it DR system}.
    \end{remark}

To obtain restrictions of DR systems that are again DR systems, we need the notion of invariant subsets, as defined below. 

\begin{definition}\label{def:invariant}
Let $(X,\sigma)$ be a DR system. We say that $Y \subseteq X$ is $\sigma$-invariant if $\sigma(Y)\subseteq Y$ and we say that $Y$ is $\sigma^{-1}$-invariant if $\sigma^{-1}(Y)\subseteq Y$. If $Y$ is invariant under both $\sigma$ and $\sigma^{-1}$, then we simply say that $Y$ is $(\sigma, \sigma^{-1})$-invariant. 
\end{definition} 

\begin{remark}\label{remark_inv}
    In \cite[Definition 5.1]{DDF}, to obtain a DR system, the authors require the restriction set $Y$ to be open. We will see below that this hypothesis is not necessary. Moreover, notice the $Y$ being $\sigma$-invariant under Definition~\ref{def:invariant} is equivalent to $\sigma^n(Y)\subseteq Y$ for all $n\geq 1$.  
\end{remark}

We will use the following lemma to prove our result regarding the restriction of DR systems $(X,\sigma)$ to $(\sigma, \sigma^{-1})$-invariant subsets.

\begin{lemma}\label{lema_rest}
    Let $(X, \sigma)$ be a DR system.
    \begin{enumerate}
     \item Let $Y,Z\subseteq X$ be disjoint subsets such that $X=Y\sqcup Z$. Then, $Y$ is $\sigma$-invariant if and only if $Z$ is $\sigma^{-1}$-invariant.
     \item If $Y\subseteq X$ is a $(\sigma, \sigma^{-1})$-invariant set, then $\sigma_{|_Y}(Y\cap \Dom(\sigma))$ is an open subset of $Y$ (with the induced topology).
    \end{enumerate}
\end{lemma}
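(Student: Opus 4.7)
For part (1), my plan is a direct argument by contradiction using the disjointness $X = Y \sqcup Z$. Assume $Y$ is $\sigma$-invariant and suppose, toward a contradiction, that there exists $x \in \sigma^{-1}(Z)$ with $x \notin Z$. Then $x \in Y \cap \Dom(\sigma)$ and $\sigma(x) \in Z$, but $\sigma$-invariance forces $\sigma(x) \in Y$, contradicting $Y \cap Z = \emptyset$. The converse implication is symmetric: if $y \in Y \cap \Dom(\sigma)$ with $\sigma(y) \in Z$, then by definition $y \in \sigma^{-1}(Z) \subseteq Z$, again contradicting disjointness. The only mild subtlety is being careful with the convention in Remark~\ref{intersection notation} that $\sigma(Y)$ means $\sigma(Y \cap \Dom(\sigma))$, but that is exactly what the argument uses.

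For part (2), write $W = \sigma_{|_Y}(Y \cap \Dom(\sigma))$. The idea is to show $W$ is open in $Y$ by exhibiting, for every $y \in W$, an open neighborhood $U$ of $y$ in $X$ such that $U \cap Y \subseteq W$. Given $y \in W$, pick $x \in Y \cap \Dom(\sigma)$ with $\sigma(x) = y$. Since $\sigma$ is a local homeomorphism, there is an open neighborhood $V \subseteq \Dom(\sigma)$ of $x$ such that $\sigma(V)$ is open in $X$ and $\sigma|_V : V \to \sigma(V)$ is a homeomorphism. I propose to take $U = \sigma(V)$.

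The verification that $U \cap Y \subseteq W$ is where both invariance hypotheses enter. Let $z \in U \cap Y$. Since $z \in \sigma(V)$, there is a unique $w \in V$ with $\sigma(w) = z$. Because $z \in Y$ and $w \in \Dom(\sigma)$, we have $w \in \sigma^{-1}(Y)$; and by $\sigma^{-1}$-invariance of $Y$, this gives $w \in Y \cap \Dom(\sigma)$. Hence $z = \sigma(w) \in W$. That $y \in U$ is immediate from $x \in V$, and $y \in Y$ follows from $\sigma$-invariance applied to $x \in Y \cap \Dom(\sigma)$.

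The main obstacle, and the reason the hypothesis in \cite[Definition 5.1]{DDF} that $Y$ be open can be relaxed, is precisely the use of $\sigma^{-1}$-invariance in the last paragraph: without it, a point $z \in \sigma(V) \cap Y$ could have its unique preimage $w \in V$ lying outside $Y$, preventing us from concluding $z \in W$. Once this observation is made, the local homeomorphism property of $\sigma$ does all the remaining work and no openness of $Y$ in $X$ is needed.
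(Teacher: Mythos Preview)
Your argument for part (1) is essentially identical to the paper's: both directions proceed by contradiction using the disjoint decomposition $X = Y \sqcup Z$, with the same care about the convention $\sigma(Y) = \sigma(Y \cap \Dom(\sigma))$.

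For part (2) your proof is correct but takes a genuinely different route. The paper establishes the global identity
\[
\sigma_{|_Y}(Y \cap \Dom(\sigma)) = \Img(\sigma) \cap Y,
\]
from which openness in $Y$ is immediate since $\Img(\sigma)$ is open in $X$; the argument uses only the two invariance hypotheses and the set-theoretic fact $\sigma(\sigma^{-1}(Y)) = Y \cap \Img(\sigma)$, never invoking the local homeomorphism property itself. Your approach instead works pointwise: around each $y \in W$ you use a chart $V$ on which $\sigma$ is a homeomorphism, and then $\sigma^{-1}$-invariance to pull points of $\sigma(V) \cap Y$ back into $Y \cap \Dom(\sigma)$. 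This is a clean local argument and makes transparent exactly where $\sigma^{-1}$-invariance replaces the openness hypothesis on $Y$ from \cite[Definition~5.1]{DDF}. The trade-off is that the paper's version yields the explicit description $\Img(\sigma_{|_Y}) = \Img(\sigma) \cap Y$, which is quoted verbatim in the proof of Proposition~\ref{restriction}; your argument proves the lemma as stated but would require that identity to be checked separately if one follows the paper's later use of it.
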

\begin{proof} 
 
First, suppose that \(\sigma(Y) \subseteq Y\). Fix an element \(x \in \sigma^{-1}(Z)\), so that \(\sigma(x) \in Z\). In particular, we have \(x \in \operatorname{Dom}(\sigma)\). If we suppose that \(x \in Y\) then, from our hypothesis, we would have \(\sigma(x) \in Y\), which is impossible since \(\sigma(x) \in Z\) and \(Y\) and \(Z\) are disjoint sets. Therefore, \(x \in Z\). Consequently, we get that \(\sigma^{-1}(Z) \subseteq Z\).

Now suppose that \(\sigma^{-1}(Z) \subseteq Z\). Fix an element \(w \in \sigma(Y)\), so that \(w = \sigma(y)\) with \(y \in Y \cap \operatorname{Dom}(\sigma)\). Suppose that \(w \in Z\). Notice that \(y \in \sigma^{-1}(\{w\}) \subseteq \sigma^{-1}(Z) \subseteq Z\), so \(y \in Z\). This is impossible since \(y \in Y\), and \(Y\) and \(Z\) are disjoint sets. Therefore, \(w \in Y\), and consequently, we get that \(\sigma(Y) \subseteq Y\).

To prove the second item, it is enough to show that $\sigma_{|_Y}(Y\cap \Dom(\sigma))=\Img(\sigma)\cap Y$, since $\Img(\sigma)$ is an open subset of $X$.

First, we show that \(\sigma(\sigma^{-1}(Y)) = Y \cap \operatorname{Im}(\sigma)\). The inclusion \(\sigma(\sigma^{-1}(Y)) \subseteq Y \cap \operatorname{Im}(\sigma)\) is clear, so we show that the reverse inclusion also holds. Fix an \(y \in Y \cap \operatorname{Im}(\sigma)\). Then \(y = \sigma(x)\) for some \(x \in \operatorname{Dom}(\sigma)\cap \sigma^{-1}(Y) \). Therefore, \(y \in \sigma(\sigma^{-1}(Y))\) as desired. 

Now we show that \(\sigma_{|_Y}(Y \cap \operatorname{Dom}(\sigma)) = \operatorname{Im}(\sigma) \cap Y\). Since \(\sigma^{-1}(Y) \subseteq Y\) and \(\sigma^{-1}(Y) \subseteq \operatorname{Dom}(\sigma)\), we have
\[ \sigma^{-1}(Y) = \sigma^{-1}(Y) \cap \operatorname{Dom}(\sigma) \subseteq Y \cap \operatorname{Dom}(\sigma). \]
By applying \(\sigma\) on both sides of the inclusion \(\sigma^{-1}(Y) \subseteq Y \cap \operatorname{Dom}(\sigma)\), we get
\[ \sigma(\sigma^{-1}(Y)) \subseteq \sigma(Y \cap \operatorname{Dom}(\sigma)). \]

Since \(\sigma(\sigma^{-1}(Y)) = Y \cap \operatorname{Im}(\sigma)\) and \(Y\) is \(\sigma\)-invariant, we have
\[ Y \cap \operatorname{Im}(\sigma) = \sigma(\sigma^{-1}(Y)) \subseteq \sigma(Y \cap \operatorname{Dom}(\sigma)) = \sigma(Y \cap \operatorname{Dom}(\sigma)) \cap \operatorname{Im}(\sigma) \subseteq Y \cap \operatorname{Im}(\sigma), \]
and consequently, we get that \(\sigma(Y \cap \operatorname{Dom}(\sigma)) = Y \cap \operatorname{Im}(\sigma)\). Since \(\sigma(Y \cap \operatorname{Dom}(\sigma)) = \sigma_{|_Y}(Y \cap \operatorname{Dom}(\sigma))\), it follows that \(\sigma_{|_Y}(Y \cap \operatorname{Dom}(\sigma)) = Y \cap \operatorname{Im}(\sigma)\).
\end{proof}

Before we prove that the restriction of a DR system to a $(\sigma, \sigma^{-1})$-invariant set is again a DR system, we make precise the definition of restriction. 

\begin{definition}
Given a DR system $(X,\sigma)$ and a subset $Y$ of $X$ such that $\sigma(Dom(\sigma)\cap Y)$ is an open set of $Y$ (in the induced topology), we denote by  $(Y, \sigma_{|_Y})$ the restricted DR system where $\sigma_{|_Y}$ is the restriction of $\sigma$ to $Y$, $\Dom(\sigma_{|_Y})=\Dom(\sigma)\cap Y$, and $\Img(\sigma_{|_Y})=\sigma(Dom(\sigma)\cap Y)$.
    
\end{definition}

\begin{proposition}\label{restriction} Let $(X,\sigma)$ be a DR system. If $Y\subseteq X$ is a $(\sigma, \sigma^{-1})$-invariant set, then $(Y, \sigma_{|_Y})$ is a DR system.
\end{proposition}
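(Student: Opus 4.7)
The plan is to verify the three requirements for $(Y, \sigma_{|_Y})$ to be a DR system: that $Y$ is locally compact Hausdorff (in the subspace topology), that $\Dom(\sigma_{|_Y})$ and $\Img(\sigma_{|_Y})$ are open in $Y$, and that $\sigma_{|_Y}$ is a local homeomorphism between them. The Hausdorff property is inherited automatically. Openness of $\Dom(\sigma_{|_Y}) = \Dom(\sigma) \cap Y$ in $Y$ is immediate from the subspace topology, and openness of $\Img(\sigma_{|_Y}) = \sigma(\Dom(\sigma) \cap Y)$ is exactly Lemma~\ref{lema_rest}(2), which in fact identifies this image with $Y \cap \Img(\sigma)$.

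Next I would show that $\sigma_{|_Y}$ is a local homeomorphism. Fix $x \in \Dom(\sigma) \cap Y$ and, using that $\sigma$ is a local homeomorphism on $X$, choose an open neighborhood $U \subseteq \Dom(\sigma)$ of $x$ such that $\sigma|_U: U \to \sigma(U)$ is a homeomorphism onto an open set $\sigma(U) \subseteq \Img(\sigma)$. Then $U \cap Y$ is an open neighborhood of $x$ in $\Dom(\sigma_{|_Y})$. The crucial dynamical step is to establish the equality
\[
\sigma(U \cap Y) \;=\; \sigma(U) \cap Y.
\]
The inclusion $\subseteq$ follows from the $\sigma$-invariance of $Y$, which gives $\sigma(U \cap Y) \subseteq Y$. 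For the reverse inclusion, if $z \in \sigma(U) \cap Y$, then $z = \sigma(x')$ for a unique $x' \in U$; since $z \in Y$ we have $x' \in \sigma^{-1}(Y)$, and the $\sigma^{-1}$-invariance forces $x' \in Y$, hence $x' \in U \cap Y$.

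With this equality in hand, $\sigma(U \cap Y)$ is open in $Y$, and $\sigma|_{U\cap Y}$, being the restriction of the bijection $\sigma|_U$ and of its continuous inverse, is a homeomorphism onto $\sigma(U)\cap Y$. This proves that $\sigma_{|_Y}$ is a local homeomorphism from the open set $\Dom(\sigma) \cap Y$ to the open set $\Img(\sigma)\cap Y$, as required.

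The step I expect to be the main obstacle is the dynamical equality $\sigma(U\cap Y) = \sigma(U)\cap Y$, since it is the only place where both invariance conditions are used in an essential way and where the structure of $Y$ has to interact with the local homeomorphism neighborhoods. A secondary subtlety, worth noting but not the main difficulty, is the local compactness of $Y$ in the subspace topology: unlike in \cite{DDF}, where $Y$ was taken to be open, here it is not automatic, and I would either appeal to an implicit assumption that $Y$ is (for instance) locally closed, or verify in the relevant later applications (nonwandering and wandering sets) that $Y$ has the required local compactness directly.
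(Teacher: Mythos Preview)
Your proposal is correct and follows the paper's approach at its core: both arguments hinge on Lemma~\ref{lema_rest}(2) to identify $\sigma(\Dom(\sigma)\cap Y)$ with $\Img(\sigma)\cap Y$ and conclude that the image is open in $Y$. The paper's proof in fact stops there, invoking the definition of ``restricted DR system'' once the image is seen to be open; it does not separately verify that $\sigma_{|_Y}$ is a local homeomorphism, nor does it mention local compactness of $Y$.

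Your argument goes further on both counts. The explicit verification that $\sigma(U\cap Y)=\sigma(U)\cap Y$ (using $\sigma$-invariance for $\subseteq$ and $\sigma^{-1}$-invariance for $\supseteq$) is exactly what is needed to see that $\sigma_{|_Y}$ is open as a map into $Y$, and the paper leaves this implicit. Likewise, your flag about local compactness is well taken: the paper simply does not address it, and in the later applications ($\Omega$ is closed, $\overline{\W}$ is closed) the subsets happen to be closed in a locally compact Hausdorff space, hence locally compact, so the issue does not bite in practice. In short, same route, but your version fills in details the paper glosses over.
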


\begin{proof}

Clearly, \(Y \cap \operatorname{Dom}(\sigma)\) is an open subset of \(Y\) (with the induced topology), since \(\operatorname{Dom}(\sigma)\) is an open subset of \(X\). Since \(Y\) is \(\sigma\)-invariant, \(\sigma(Y \cap \operatorname{Dom}(\sigma))\) is a subset of \(Y\). Given that \(Y\) is \((\sigma, \sigma^{-1})\)-invariant, from the second item of Lemma~\ref{lema_rest}, we obtain that \(\sigma(Y \cap \operatorname{Dom}(\sigma))=\operatorname{Im}(\sigma) \cap Y\) is an open set in \(Y\). Therefore, \((Y, \sigma_{|_Y})\) is a restricted DR system.
\end{proof}

We end this section with a lemma that will be useful throughout this work.

\begin{lemma}\label{subset lemma}
    Let $(X,\sigma)$ be a DR system, and let $U\subseteq X$.
    \begin{enumerate}
    \item For $k\leq 0$ we have $\sigma^{-k}(\sigma^k(U))=U\cap \Img(\sigma^{-k})$, and for $k\geq 1$ we have $\sigma^{-k}(\sigma^k(U))\subseteq \Dom(\sigma^k)$. Consequently, for each $k\in \Z$, we have that $\sigma^{-k}(\sigma^k(U))\subseteq \Dom(\sigma)\cup U$.
    \item     
For each \(n \geq 1\) and \(k \in \mathbb{Z}\), the following holds:
\[
\sigma^n(U) \cap \sigma^{-k}(\sigma^k(U)) \subseteq \operatorname{Dom}(\sigma) \cup U,
\]
and if \(U \subseteq \operatorname{Dom}(\sigma)\), then
\[
\sigma^n(U) \cap \sigma^{-k}(\sigma^k(U)) \subseteq \operatorname{Dom}(\sigma).
\]
    \item If $\sigma$ is injective, then $\sigma^{-k}(\sigma^k(U))=U\cap \Dom(\sigma^k)$ for each $k\geq 0$. 
    \item If $\sigma$ is injective, then $\sigma^{-k}(\sigma^k(U))\subseteq U$ for each $k\in \Z$.
    
     \end{enumerate}
\end{lemma}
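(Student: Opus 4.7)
The plan is to unpack the notational convention of Remark~\ref{intersection notation} and then do careful bookkeeping of domains, images, and preimages; no structural result beyond the definitions is needed. Recall that for $k\geq 1$ one has $\sigma^k(U)=\sigma^k(U\cap \Dom(\sigma^k))$, while for $k<0$ the symbol $\sigma^k(U)$ denotes the set-theoretic preimage $\{x\in\Dom(\sigma^{-k}):\sigma^{-k}(x)\in U\}$, and $\sigma^0$ is the identity on $X$.

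For item (1) I would split on the sign of $k$. When $k\geq 1$, the set $\sigma^{-k}(\sigma^k(U))$ is by definition a preimage under $\sigma^k$, hence sits inside $\Dom(\sigma^k)\subseteq \Dom(\sigma)$. When $k\leq 0$, setting $m=-k\geq 0$, I would establish $\sigma^m(\sigma^{-m}(U))=U\cap \Img(\sigma^m)$ by a routine double inclusion: any element on the left is $\sigma^m(x)$ for some $x\in\Dom(\sigma^m)$ with $\sigma^m(x)\in U$, which gives ``$\subseteq$''; conversely, any $y\in U\cap\Img(\sigma^m)$ equals $\sigma^m(x)$ for some $x\in\Dom(\sigma^m)$, and this $x$ automatically lies in $\sigma^{-m}(U)$. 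Combining both cases yields the final clause $\sigma^{-k}(\sigma^k(U))\subseteq \Dom(\sigma)\cup U$. Item (2) is then immediate: the intersection $\sigma^n(U)\cap \sigma^{-k}(\sigma^k(U))$ is contained in $\sigma^{-k}(\sigma^k(U))$, and hence in $\Dom(\sigma)\cup U$ by (1); under the extra hypothesis $U\subseteq \Dom(\sigma)$ this simplifies to $\Dom(\sigma)$.

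For items (3) and (4), injectivity of $\sigma$ (and hence of every iterate $\sigma^k$) sharpens the preimage computation. In item (3) with $k\geq 1$, a point $x\in \sigma^{-k}(\sigma^k(U))$ satisfies $x\in\Dom(\sigma^k)$ and $\sigma^k(x)=\sigma^k(y)$ for some $y\in U\cap \Dom(\sigma^k)$; injectivity forces $x=y$, giving $x\in U\cap\Dom(\sigma^k)$. The reverse inclusion and the $k=0$ case are trivial. Item (4) is then a direct consequence: for $k\geq 0$ it follows from (3) since $U\cap\Dom(\sigma^k)\subseteq U$, and for $k\leq 0$ it follows from the identity $\sigma^{-k}(\sigma^k(U))=U\cap\Img(\sigma^{-k})$ established in (1).

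I do not expect any serious obstacle here. The only point to watch is strict adherence to the convention of Remark~\ref{intersection notation}, which silently intersects $U$ with the relevant domain before applying $\sigma^k$; without this care one might mistakenly conclude equality (rather than the one-sided inclusion) in the non-injective setting of item (1), and the distinction between (1) and (3) would collapse.
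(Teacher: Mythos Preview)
Your proposal is correct and follows essentially the same approach as the paper's proof: both split item (1) on the sign of $k$, derive item (2) directly from (1), use injectivity of iterates for (3), and obtain (4) by combining (1) and (3). Your write-up is in fact a bit more explicit about the double inclusion in the $k\leq 0$ case of (1) and about how the two halves of (4) are deduced, but the underlying argument is the same.
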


\begin{proof} 

To prove the first item, consider the case \( k \geq 1 \). In this case, note that for any subset \( Y \subseteq X \), the inverse image \( \sigma^{-k}(Y) \) is contained within \( \operatorname{Dom}(\sigma^k) \). Consequently, we have
\[
\sigma^{-k}(\sigma^k(U)) \subseteq \operatorname{Dom}(\sigma^k) \subseteq \operatorname{Dom}(\sigma).
\]
For \( k \leq 0 \), observe that an element \( x \) belongs to \( \sigma^{-k}(\sigma^k(U)) \) if, and only if, \( x = \sigma^{-k}(y) \) for some \( y \in \sigma^k(U) \) such that \( \sigma^{-k}(y)  \in  U \cap \operatorname{Im}(\sigma^{-k}) \). Therefore, we have $
\sigma^{-k}(\sigma^k(U)) = U \cap \operatorname{Im}(\sigma^{-k}).$

The second item follows from the first one.

To prove the third item, observe that \( x \in \sigma^{-k}(\sigma^k(U)) \) if and only if there exists \( y \in U \cap \operatorname{Dom}(\sigma^k) \) such that \( \sigma^k(x) = \sigma^k(y)\). In the presence of injectivity of \(\sigma\), this implies that 
\[
\sigma^{-k}(\sigma^k(U)) = U \cap \operatorname{Dom}(\sigma^k).
\]

The fourth item follows from the first and third items.
\end{proof}

\subsection{The metric entropy of DR systems.}

In this section, we recall the definition of the metric entropy of a DR system, as introduced in \cite{DDF}, and introduce the notion of generating sets, which we show can also be used to define metric entropy. 

 For $n\in\NN$ and $x\in X$, using the convention that $\Dom (\sigma^0)=X$, we define 
\begin{align}\label{def:I_n}
 I_n(x)=\{i\in\{0,1,\dots, n-1\}:x\in \Dom(\sigma^i)\}.
 \end{align}
 To measure the distance between the iterates of two points, we use the map $d_n:X\times X\to[0, +\infty)$, defined by
 $$
 d_n(x,y)=\max_{i\in I_n(x)\cap I_n(y)} d(\sigma^i(x),\sigma^i(y)).
 $$

In \cite{DDF}, the metric entropy is defined in terms of separated sets and spanning sets. In our work, it will be convenient to work with a slight modification of a spanning set, which we call a generating set; see below.

\begin{definition}\label{definicaosepspan} Let $(X,\sigma)$ be a DR system, $K$ a compact subset of $X$, $n\in\N$, and $Y$ a subset of $X$. Given $\varepsilon>0$, we say that:
\begin{itemize}
 \item a subset $A\subset K\cap Y$ is an $(n,\varepsilon,\sigma, K\cap Y)$-separated set if $d_n(x,y)>\varepsilon$ for every $x,y\in A $, $x\neq y$.
 
 \item a subset $B\subset K\cap Y$ is an $(n,\varepsilon,\sigma, K\cap Y)$-spanning set if, for every $x\in K\cap Y$, 
 there exists $y\in B$ such that  $d_n(x,y)\leq\varepsilon$.
 
 \item a subset $C\subset K\cap Y$ is an $(n,\varepsilon,\sigma, K\cap Y)$-generating set if, for every $x\in K\cap Y$, 
 there exists $y\in B$ such that  $d_n(x,y)<\varepsilon$.
 
 \end{itemize}
 We use the notation $sep(n,\varepsilon, \sigma, K\cap Y)$ to denote the largest cardinality of the $(n,\varepsilon,\sigma, K\cap Y)$-separated sets, $span(n,\varepsilon, \sigma, K\cap Y)$ to denote the smallest cardinality of the $(n,\varepsilon,\sigma, K\cap Y)$-spanning sets, and $gen(n,\varepsilon, \sigma, K\cap Y)$ to denote the smallest cardinality of the $(n,\varepsilon,\sigma, K\cap Y)$-generating sets. 
 \end{definition}

Since a local homeomorphism $(X,\sigma)$ may not be defined in the whole space, in \cite{DDF} the metric entropy is defined in terms of the supremum of the cardinality of spanning/separated sets of closed sets that approximate the domain of $\sigma^{n-1}$ from inside. We use the same idea for generating sets, see below. 

\begin{definition}\label{papagaio} Let $(X,\sigma)$ be a DR system, $K$ be a compact subset of $X$, $n\in\N$, and $\varepsilon>0$. Define
\[ssep(n,\varepsilon, \sigma, K)= \displaystyle \sup_{F}sep(n,\varepsilon, \sigma, K\cap F),\] 
\[sspan(n,\varepsilon, \sigma, K)= \displaystyle \sup_{F}span(n,\varepsilon, \sigma, K\cap F),\]
and 
\[sgen(n,\varepsilon, \sigma, K)= \displaystyle \sup_{F}gen(n,\varepsilon, \sigma, K\cap F),\]
where the supremum are taken over all closed sets $F\subseteq \Dom(\sigma^{n-1}).$
\end{definition}

For metric spaces with a basis of clopen sets, we have the following characterization of the numbers defined above.

\begin{proposition}
Let $(X,\sigma)$ be a DR system, where the topology in $X$ has a basis of clopen sets, $K$ be a compact subset of $X$, $n\in\N$, and $\varepsilon>0$. Then, the numbers $ssep(n,\varepsilon, \sigma, K)$, $sspan(n,\varepsilon, \sigma, K)$, and $sgen(n,\varepsilon, \sigma, K)$ in Definition~\ref{papagaio} do not change if the supremums in their definition are altered to be taken over all clopen sets $C\subseteq \Dom(\sigma^{n-1}).$

\end{proposition}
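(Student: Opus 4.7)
The plan is to establish both inequalities between each of $ssep$, $sspan$, $sgen$ and its clopen analogue. The direction in which the supremum over clopen sets is bounded by the supremum over closed sets is immediate, since every clopen set is closed and thus the clopen-supremum is taken over a subfamily of the closed-supremum.

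For the reverse direction, the key construction is a clopen thickening: given a closed $F \subseteq \Dom(\sigma^{n-1})$, the set $K \cap F$ is compact (as a closed subset of the compact $K$) and contained in the open set $\Dom(\sigma^{n-1})$. For each $x \in K \cap F$, the clopen basis hypothesis yields a clopen neighborhood $U_x \subseteq \Dom(\sigma^{n-1})$ of $x$. By compactness of $K \cap F$, finitely many $U_{x_1}, \dots, U_{x_m}$ cover $K \cap F$, and their union $C$ is a clopen subset of $\Dom(\sigma^{n-1})$ containing $K \cap F$.

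For separated sets, this suffices: any $(n,\varepsilon,\sigma,K \cap F)$-separated set $A$ lies in $K \cap F \subseteq K \cap C$ and is still $(n,\varepsilon,\sigma,K \cap C)$-separated, so $sep(n,\varepsilon,\sigma,K\cap F) \leq sep(n,\varepsilon,\sigma,K\cap C)$; taking supremum over closed $F$ yields the desired inequality.

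The main obstacle lies in the spanning and generating cases, since neither $span$ nor $gen$ is monotonic in the underlying set (a larger set may admit a smaller spanning/generating set if an ``intermediate'' point covers several points at once), so the thickening alone does not transfer the relevant cardinality bound. The plan is to argue by contradiction: assume $span(n,\varepsilon,\sigma, K \cap F) = N$ yet every clopen $C \supseteq K \cap F$ with $C \subseteq \Dom(\sigma^{n-1})$ satisfies $span(n,\varepsilon,\sigma,K \cap C) \leq N-1$. Since $K$ is a compact Hausdorff $0$-dimensional metric space, $K \cap F$ is the intersection of a decreasing sequence $C_1 \supseteq C_2 \supseteq \cdots$ of clopen neighborhoods with $C_\ell \subseteq \Dom(\sigma^{n-1})$ and $\bigcap_\ell C_\ell \cap K = K \cap F$. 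Choose minimum $(n,\varepsilon)$-spanning sets $B_\ell \subseteq K \cap C_\ell$ with $|B_\ell| \leq N-1$; by compactness of $K$ extract a componentwise convergent subsequence, obtaining a set $B^\ast \subseteq K \cap F$ of cardinality at most $N-1$. Continuity of $d_n$ and a pigeonhole argument on which component spans a given point show that $B^\ast$ is an $(n,\varepsilon,\sigma,K \cap F)$-spanning set, contradicting $span(n,\varepsilon,\sigma,K \cap F) = N$. The generating case is analogous; the added subtlety is that the strict inequality $d_n(x,y) < \varepsilon$ must survive the limit, which is the main technical point to handle and can be addressed by an approximation in $\varepsilon$ using the clopen basis to separate the would-be limiting equalities.
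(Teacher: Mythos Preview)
Your identification of the monotonicity issue is sharper than the paper's own treatment. The paper simply asserts that it suffices to find, for each closed $F\subseteq\Dom(\sigma^{n-1})$, a clopen $C$ with $F\subseteq C\subseteq\Dom(\sigma^{n-1})$, and then constructs $C$ by covering $F$ with finitely many basic clopen sets (tacitly assuming $X$ compact; your version, covering only $K\cap F$, is the correct formulation). That reduction is valid for $ssep$, since $sep(n,\varepsilon,\sigma,K\cap F)\le sep(n,\varepsilon,\sigma,K\cap C)$ whenever $K\cap F\subseteq K\cap C$, but the paper gives no argument for $sspan$ or $sgen$, where such monotonicity fails. Your limiting argument for $sspan$ genuinely fills this gap: a decreasing sequence of clopen neighbourhoods of $K\cap F$ inside $\Dom(\sigma^{n-1})$ with intersection $K\cap F$ in $K$ exists (using that $K$ is compact metric and zero-dimensional), $d_n$ is continuous on $\Dom(\sigma^{n-1})\times\Dom(\sigma^{n-1})$, and the closed condition $d_n\le\varepsilon$ passes to the limit.

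The real problem is the $sgen$ case, and it cannot be repaired: the assertion is \emph{false} for $sgen$. Take $X=\{0\}\cup\{1/m:m\in\N\}\subseteq\R$ with the inherited metric (compact, with a basis of clopen sets), $\sigma=\operatorname{id}_X$, $K=X$, $n=1$, $\varepsilon=1$, so that $\Dom(\sigma^{0})=X$ and $d_1=d$. For the closed set $F=\{0,1\}$ one has $gen(1,1,\sigma,F)=2$, because $d(0,1)=1\not<1$ forces any generating subset of $\{0,1\}$ to contain both points. On the other hand, every nonempty clopen $C\subseteq X$ satisfies $gen(1,1,\sigma,C)=1$: if $C$ contains some $1/m$ with $m\ge 2$ then the open ball of radius $1$ about $1/m$ already contains all of $X\supseteq C$, and the only remaining nonempty clopen set is $\{1\}$. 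Hence
\[
\sup_{F\ \text{closed}}gen(1,1,\sigma,K\cap F)=2\;>\;1=\sup_{C\ \text{clopen}}gen(1,1,\sigma,K\cap C).
\]
Your proposed ``approximation in $\varepsilon$'' cannot rescue this, since $\varepsilon$ is fixed in the statement; the strict inequality in the definition of a generating set is exactly what breaks. So your instinct that the $gen$ case is the delicate one was right, but the correct conclusion is that this part of the stated proposition does not hold.
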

\begin{proof}
Notice that the result follows if we show that, given a closed set $F$ contained in $\Dom(\sigma^{n-1})$, there is a clopen set $C$, contained in $\Dom(\sigma^{n-1})$, that contains $F$. We show this below.

Let $F$ be a closed set contained in $\Dom(\sigma^{n-1})$. Since $X$ is compact so is $F$. For each $y\in F$, let $V_y$ be a clopen neighborhood of $y$ contained in $\Dom(\sigma^{n-1})$ (which exists by the hypothesis on the topology of $X$). Clearly, the $V_y$ cover $F$ and hence there is a finite sub-cover, say $\{V_{y_1},\ldots, V_{y_k}\}$. Then, the union of the $V_{y_j}$, $j=1,\ldots, k$, is the desired clopen set.
\end{proof}

The entropy of a local homeomorphism $(X,\sigma)$ is defined in \cite{DDF} as follows.

\begin{definition}\label{defentropy}Let $(X,\sigma)$ be a DR system on a metric space $(X,d)$.
\begin{enumerate}
   
\item For each compact set $K\subseteq X$ and $\varepsilon>0$ define
$$
h_{\varepsilon}(\sigma,K,d)=\limsup_{n\to\infty}\frac{1}{n}\log \mbox{ssep}(n,\varepsilon,\sigma,K).$$
\item For each compact set $K\subseteq X$ define
$$
h_{d}(\sigma,K)=\lim_{\varepsilon\to0}h_{\varepsilon}(\sigma,K,d).$$
\item 
Define the metric entropy of the DR system $(X,\sigma)$ as
\begin{equation}
h_d(\sigma)=\sup_{K\subseteq X}h_{d}(\sigma,K)
\end{equation}
where the supremum is taken over all the compact sets $K\subseteq X$.
\end{enumerate}
\end{definition}

\begin{remark}\label{entropy via span}
In \cite{DDF} it is observed that the entropy can be defined by replacing $ssep(n,\varepsilon, \sigma, K)$ with $sspan(n,\varepsilon, \sigma, K)$ above. This follows from the observation, proved in \cite{DDF}, that for $K$ a compact subset of $X$, $n\in \N$ and $\varepsilon>0$ the following inequalities are true.
$$sspan(n,\varepsilon, \sigma, K)\leq ssep(n,\varepsilon, \sigma, K)\leq sspan(n,\frac{\varepsilon}{2}, \sigma, K).$$
\end{remark}

Next, we prove that similar inequalities are true for $sgen(n,\varepsilon, \sigma, K)$.

\begin{proposition}\label{span and gen}  Let $(X,\sigma)$ be a DR system, $K\subseteq X$ be a compact subset, $\varepsilon>0$ and $n\in\N$. Then 
$$sspan(n, \varepsilon, \sigma, K)\leq sgen(n, \varepsilon, \sigma, K)\leq sspan (n, \frac{\varepsilon}{2}, \sigma, K).$$
\end{proposition}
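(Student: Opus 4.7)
The plan is to prove both inequalities at the level of each closed set $F \subseteq \Dom(\sigma^{n-1})$ appearing in the supremum, and then take suprema. The two bounds rest entirely on the purely logical fact that the generating condition $d_n(x,y) < \varepsilon$ is strictly stronger than the spanning condition $d_n(x,y) \leq \varepsilon$, while the spanning condition at scale $\varepsilon/2$ implies the generating condition at scale $\varepsilon$.

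For the first inequality, I would fix a closed $F \subseteq \Dom(\sigma^{n-1})$ and observe that any $(n,\varepsilon,\sigma, K\cap F)$-generating set $C$ is automatically an $(n,\varepsilon,\sigma,K\cap F)$-spanning set, because $d_n(x,y) < \varepsilon$ implies $d_n(x,y) \leq \varepsilon$. Hence $span(n,\varepsilon,\sigma,K\cap F) \leq gen(n,\varepsilon,\sigma,K\cap F)$. Taking the supremum over closed $F \subseteq \Dom(\sigma^{n-1})$ yields $sspan(n,\varepsilon,\sigma,K) \leq sgen(n,\varepsilon,\sigma,K)$.

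For the second inequality, I would again fix $F$ and take any $(n,\varepsilon/2,\sigma,K\cap F)$-spanning set $B$. Given $x \in K\cap F$, there exists $y \in B$ with $d_n(x,y) \leq \varepsilon/2 < \varepsilon$, so $B$ is also an $(n,\varepsilon,\sigma,K\cap F)$-generating set. Thus $gen(n,\varepsilon,\sigma,K\cap F) \leq span(n,\varepsilon/2,\sigma,K\cap F)$, and taking suprema gives $sgen(n,\varepsilon,\sigma,K) \leq sspan(n,\varepsilon/2,\sigma,K)$.

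I do not expect a real obstacle here; the two implications are immediate from the definitions, and the passage to suprema over the collection of closed subsets of $\Dom(\sigma^{n-1})$ is monotone in each $F$. The only subtlety worth stating explicitly is that both inequalities hold with the same $F$ on each side, so the supremum is taken over the same family in both terms; this is what makes the passage from the pointwise bounds on $span$, $gen$ to the corresponding bounds on $sspan$, $sgen$ legitimate.
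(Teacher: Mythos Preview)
Your proposal is correct and follows essentially the same approach as the paper: both argue at the level of a fixed closed $F\subseteq\Dom(\sigma^{n-1})$ that a generating set is automatically spanning and that an $(n,\varepsilon/2)$-spanning set is automatically $(n,\varepsilon)$-generating, then pass to the supremum over $F$. Your explicit remark that the supremum is taken over the same family of closed sets on both sides is a welcome clarification, but otherwise the arguments coincide.
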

\begin{proof} Let $F\subseteq \Dom(\sigma^{n-1})$ be a closed set and $A$ an $(n,\varepsilon, \sigma, K\cap F)$-generating set. Then, in particular, $A$ is also an $(n,\varepsilon, \sigma, K\cap F)$-spanning set. Therefore, $span(n,\varepsilon, \sigma, K\cap F)\leq gen(n,\varepsilon, \sigma, K\cap F)$. Similarly, if $B$ is an $(n,\frac{\varepsilon}{2}, \sigma, K\cap F)$-spanning set then $B$ is an $(n,\varepsilon, \sigma, K\cap F)$-generating set, and so $gen(n,\varepsilon, \sigma, K\cap F)\leq span(n, \frac{\varepsilon}{2}, \sigma, K\cap F)$. Thus, we have proved that
$$span(n,\varepsilon, \sigma, K\cap F)\leq gen(n, \varepsilon, \sigma, K\cap F)\leq span(n, \frac{\varepsilon}{2}, \sigma, K\cap F).$$ The desired result follows by taking the supremum over all closed sets $F\subseteq \Dom(\sigma^{n-1})$.
\end{proof}

\begin{remark}\label{entropy via gen} It follows from the proposition above that we can replace $ssep(n,\varepsilon, \sigma, K)$ by $sgen(n, \varepsilon, \sigma, K)$ in the definition of entropy (Definition~\ref{defentropy}). 
\end{remark}

\section{Nonwandering sets}

The definition and properties of the nonwandering set of a local homeomorphism are the focus of this section. As mentioned in the introduction, when defining the nonwandering set, we consider the fact that a local homeomorphism is not necessarily invertible. Consequently, the preimage of the direct image of a set may, a priori, be larger than the original set. The precise definitions of wandering and nonwandering points are provided below.

\begin{definition}\label{wandering} Let $(X,\sigma)$ be a DR system. We say that 
$x$ is wandering if there exists an open neighborhood $U$ of $x$  such that $\sigma^n(U)\cap \sigma^{-k} (\sigma^k  (U))= \emptyset$ for all $n\geq 1$, $k\in \Z$. We denote the set of wandering points by $\W$. We say that a point $x\in X$ is nonwandering if it is not wandering, that is, if for every open neighborhood $U$ of $x$ there exists an $n\geq 1$ and a $k\in \Z$ such that $\sigma^n(U)\cap \sigma^{-k} (\sigma^k  (U))\neq \emptyset$. We denote the set of nonwandering points by $\Omega$.
\end{definition}

\begin{remark}\label{fixed points are nonwandering}
    If \( x \in \Dom(\sigma) \) is such that there exists some \( m, n \in \mathbb{N}\cup \{0\} \), with \(m\neq n\), and such that \( \sigma^m(x) = \sigma^n(x) \), then \( x \in \Omega \). In particular, every fixed point of \( \Dom(\sigma) \) is an element of \( \Omega \). To see this, let \(n<m\) be such that \(\sigma^n(x)=\sigma^m(x)\), and write $m=n+r$. Notice that, then \(\sigma^r(x)\in \sigma^{-n}(\{\sigma^n(x)\})\), since \(\sigma^n(\sigma^r(x))=\sigma^m(x)=\sigma^n(x)\). Therefore, $\sigma^r(x)\in \sigma^r(U)\cap \sigma^{-n}(\sigma^n(U))$ for each open set $U$ which contains $x$, and consequently \(x\in \Omega\).
\end{remark}

Next, we present several examples of DR systems and their corresponding wandering sets. Later in this section, we will characterize the wandering points in shift spaces over arbitrary alphabets. We begin with an example involving a homeomorphism of $\R$, for which our definition of wandering sets coincides with the classical one.

\begin{example}\label{entropias diferentes}
   Let \((\mathbb{R}, \sigma)\) be a DR system (with the usual metric \(d\)), where \(\sigma: \mathbb{R} \rightarrow \mathbb{R}\) is the homeomorphism \(\sigma(x) = 2x\). By Remark \ref{fixed points are nonwandering}, since \(0\) is a fixed point, we have \(0 \in \Omega\). Moreover, for each element \(x > 0\), let \(a \in \left(\frac{x}{2}, x\right)\), so that \(x \in (a, 2a)\). Define \(U = (a, 2a)\), which is an open neighborhood of \(x\), and observe that \(\sigma^n(U) \cap U = \emptyset\) for each \(n \geq 1\). Since \(\sigma\) is a homeomorphism, we have \(\sigma^{-k}(\sigma^k(U)) = U\) for each \(k \in \mathbb{Z}\). Therefore, \(\sigma^n(U) \cap \sigma^{-k}(\sigma^k(U)) = \emptyset\) for each \(n \geq 1\) and \(k \in \mathbb{Z}\). Consequently, \((0, \infty) \subseteq \W\). Similarly, one shows that \((-\infty, 0) \subseteq \W\), and so we conclude that \(\Omega = \{0\}\) and \(\W = \mathbb{R} \setminus \{0\}\).

\end{example}

In the following example, we describe the wandering sets of a DR system $(X,\sigma)$ where the domain is not the entire set $X$.

\begin{example}\label{exemplo_fecho_de_W}
Let \((X, \sigma)\) be the DR system where \(X = [0,1)\), \(\sigma\) is given by \(\sigma(x) = 2x\), and \(\Dom(\sigma) = [0, \frac{1}{2})\). By induction, we obtain that \(\Dom(\sigma^n) = [0, \frac{1}{2^n})\), \(\Img(\sigma^n) = [0,1)\), and \(\sigma^n(x) = 2^n x\) for all \(n \geq 0\). Fix \(x \in (0,1)\). Considering \(U = (a, b)\) as a neighborhood of \(x\) with \(b < 2a\), we have \(\sigma^n(U) \cap \sigma^{-k}(\sigma^k(U)) = \emptyset\) for all \(n \geq 1\) and \(k \in \mathbb{Z}\). (To see this, notice that by the fourth item of Lemma \ref{subset lemma}, \(\sigma^{-k}(\sigma^k(U)) \subseteq U\); moreover, since \(b < 2a\), \(\sigma^n(U) \cap U = \emptyset\).) Therefore, the set \((0,1)\) is a subset of \(\W\). The point \(x = 0\) is nonwandering since it is a fixed point (see Remark~\ref{fixed points are nonwandering}). Thus, we conclude that \(\W = (0,1)\) and \(\Omega = \{0\}\).
\end{example}

A similar example, defined on the Cantor set, is given below.

\begin{example}\label{Cantor}
Let \(X\) be the Cantor set on the real line, and let \(\sigma\) be the local homeomorphism given by multiplication by 3, with \(\Dom(\sigma) = [0, \frac{1}{3}] \cap X\). Notice that \(\Dom(\sigma)\) is clopen, and similarly to Example~\ref{exemplo_fecho_de_W}, one can show that \(\Omega = \{0\}\). 
\end{example}

\begin{example}\label{Sierp}
We briefly recall the construction of the Sierpinski triangle via Iterated Function System (IFS). Start with a filled equilateral triangle $S(0)$. Divide it into four smaller equilateral triangles using the midpoints of the sides. Remove the interior of the central triangle, leaving only the boundaries, to form $S(1)$. Repeat this process on each of the remaining three triangles to get $S(2)$, and continue iterating to obtain $S(n)$ for higher $n$. The Sierpinski triangle $S$ is the intersection of all sets $S(n)$ as $n$ approaches infinity, resulting in a fractal pattern.

Using an IFS, $S(1)$ can be formed by scaling three copies of $S(0)$ by a factor of $r = \frac{1}{2}$ and translating two of the triangles. If the vertices of $S(0)$ are at $(0,0)$ and $(1,0)$ on $\R^2$, the translations place the triangles at $\left(\frac{1}{2}, 0\right)$ and $\left(\frac{1}{4}, \frac{\sqrt{3}}{4}\right)$.
\begin{figure}[h!]
\includegraphics[width=17cm]{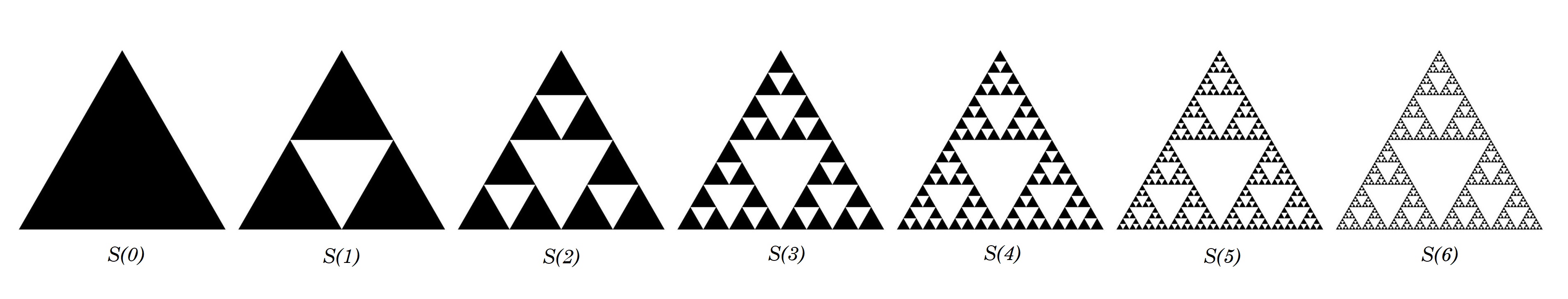}
\end{figure}

The IFS is defined by three transformations:
$$ f_1(x) = \frac{1}{2}x $$
$$f_2(x) = \frac{1}{2}x + \left[\frac{1}{2}, 0\right]$$
$$f_3(x) = \frac{1}{2}x + \left[\frac{1}{4}, \frac{\sqrt{3}}{4}\right]$$
Applying this IFS repeatedly on any compact set will result in convergence to the Sierpinski triangle, the fractal attractor of the system.

Consider the local homeomorphism $\sigma$ on $S$, defined by $\sigma(x) = 2x$, which scales points by a factor of 2. Let $\text{Dom}(\sigma) = f_1(S)$ (the image of the function $f_1$ applied to the Sierpinski triangle $S$). In this case, the image of $\sigma$ is $Im(\sigma) = S$, and the pair $(S, \sigma)$ defines a DR system. Once again, similarly to Example~\ref{exemplo_fecho_de_W}, we can show that  $\Omega = \{(0,0)\}$.

\end{example} 

Next, we outline a few properties of the wandering sets, which will be useful for the computation of the entropy of DR systems.

\begin{proposition}\label{omega_closed}
Let $(X,\sigma)$ be a DR system. The nonwandering set $\Omega$ is closed in $X$ and satisfies $\Omega\subseteq \overline{\Dom(\sigma)}$. 
\end{proposition}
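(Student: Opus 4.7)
The plan is to show each claim separately using only the definition of wandering point and Remark~\ref{intersection notation}.

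For closedness of $\Omega$, I would show that the wandering set $\W = X \setminus \Omega$ is open. Pick $x \in \W$, and let $U$ be an open neighborhood of $x$ with $\sigma^n(U)\cap \sigma^{-k}(\sigma^k(U))=\emptyset$ for all $n\geq 1$ and $k\in\Z$. The key observation is that this same $U$ witnesses wandering for \emph{every} point it contains: if $y\in U$, then $U$ itself is an open neighborhood of $y$ and already satisfies the required condition, so $y\in\W$. Thus $U\subseteq \W$, proving $\W$ is open.

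For the inclusion $\Omega\subseteq \overline{\Dom(\sigma)}$, I would argue the contrapositive and show $X\setminus \overline{\Dom(\sigma)}\subseteq \W$. Given $x\notin \overline{\Dom(\sigma)}$, take the open set $U:=X\setminus \overline{\Dom(\sigma)}$, which is an open neighborhood of $x$ disjoint from $\Dom(\sigma)$. Since $\Dom(\sigma^n)\subseteq \Dom(\sigma)$ for every $n\geq 1$, we get $U\cap \Dom(\sigma^n)=\emptyset$ for all $n\geq 1$. By the convention in Remark~\ref{intersection notation}, this means $\sigma^n(U)=\sigma^n(U\cap \Dom(\sigma^n))=\emptyset$ for every $n\geq 1$. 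Consequently
\[
\sigma^n(U)\cap \sigma^{-k}(\sigma^k(U)) = \emptyset
\]
holds trivially for all $n\geq 1$ and $k\in\Z$, showing $x\in\W$.

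I do not foresee a real obstacle here: both parts reduce to unwinding the quantifier structure in Definition~\ref{wandering} and using the convention that $\sigma^n(U)$ tacitly intersects $U$ with $\Dom(\sigma^n)$ first. The only point worth double-checking is that in the second part the indices $k\leq 0$ do not cause trouble, but since $\sigma^n(U)$ is already empty for $n\geq 1$ the intersection with $\sigma^{-k}(\sigma^k(U))$ is automatically empty regardless of what the preimage set looks like.
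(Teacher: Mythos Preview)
Your proposal is correct and follows essentially the same argument as the paper: both parts proceed by showing $\W$ is open (using that a witnessing neighborhood $U$ works for all of its points) and then that any point outside $\overline{\Dom(\sigma)}$ has a neighborhood disjoint from $\Dom(\sigma)$, forcing $\sigma^n(U)=\emptyset$. The only cosmetic difference is that you take $U=X\setminus\overline{\Dom(\sigma)}$ explicitly, whereas the paper picks some open $U$ contained in that set; both choices work equally well.
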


\begin{proof}

To prove the first statement of the proposition, we show that \(\W\) is an open set. Let \(x \in \W\). Then, there exists an open set \(U \subseteq X\) containing \(x\) such that for all \(n \geq 1\) and \(k \in \mathbb{Z}\), it holds that \(\sigma^n(U) \cap \sigma^{-k}(\sigma^k(U)) = \emptyset\). Notice that each element of \(U\) is also wandering, so \(U \subseteq \W\). Consequently, \(\W\) is an open set, and therefore \(\Omega\) is closed.

For the second part, if \(X = \overline{\Dom(\sigma)}\), then the result is clear. If \(X \neq \overline{\Dom(\sigma)}\), fix \(x \in X \backslash \overline{\Dom(\sigma)}\). Since \(X \backslash \overline{\Dom(\sigma)}\) is open, there exists an open set \(U \subseteq X \backslash \overline{\Dom(\sigma)}\) with \(x \in U\). In this case, since \(U \cap \Dom(\sigma) = \emptyset\), we have \(\sigma^n(U) = \emptyset\) for all \(n \geq 1\). Hence, \(\sigma^n(U) \cap \sigma^{-k}(\sigma^k(U)) = \emptyset\) for all \(n \geq 1\) and \(k \in \mathbb{Z}\). Therefore, \(x \in \W\). This implies that \(X \backslash \overline{\Dom(\sigma)} \subseteq \W\), i.e., \(\Omega \subseteq \overline{\Dom(\sigma)}\).
\end{proof}

In Section~\ref{wanderentropy}, we relate the entropy of a DR system to the entropy of the systems obtained by restricting to the (non)wandering sets. Before doing so, we must first prove that the restriction of a DR system to the (non)wandering set also yields a DR system. To establish this, in light of Proposition~\ref{restriction}, we prove below that both the nonwandering and wandering sets are invariant under the forward and inverse images of the local homeomorphism.

\begin{proposition}\label{invariantshallbe}
Let $(X,\sigma)$ be a DR system. Then the nonwandering set $\Omega$, the wandering set $\W$, and the closure of $\W$ in $X$, denoted by $\overline{\W}$, are $(\sigma, \sigma^{-1})$-invariant sets. 
\end{proposition}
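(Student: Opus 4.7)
The plan is to reduce everything to showing that $\W$ is $(\sigma,\sigma^{-1})$-invariant. Since $X=\W\sqcup\Omega$, the first item of Lemma~\ref{lema_rest} then immediately transfers this invariance to $\Omega$, and a continuity argument using the local homeomorphism property will handle $\overline{\W}$.

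For the $\sigma^{-1}$-invariance of $\W$, I would take $y\in\Dom(\sigma)$ with $\sigma(y)\in\W$ witnessed by an open neighborhood $W$, and use the local homeomorphism property to choose an open neighborhood $U\ni y$ with $U\subseteq\Dom(\sigma)$, $\sigma|_U$ a homeomorphism onto its image, and $\sigma(U)\subseteq W$. Then for any hypothetical $z\in\sigma^n(U)\cap\sigma^{-k}(\sigma^k(U))$, the second item of Lemma~\ref{subset lemma} places $z$ in $\Dom(\sigma)$, and pushing forward one obtains $\sigma(z)\in\sigma^n(W)$ always; a case split on $k$ (using the first item of Lemma~\ref{subset lemma} for $k\le 0$) locates $\sigma(z)$ in $\sigma^{-l}(\sigma^l(W))$ with $l=\max(k-1,0)$, contradicting the wandering of $W$.

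For the $\sigma$-invariance of $\W$, I would take $x\in\W\cap\Dom(\sigma)$ with witness $U$. The wandering condition is monotone in the witness (the inclusion $V\subseteq U$ is preserved by both $\sigma^n$ and $\sigma^{-k}\circ\sigma^k$), so I may shrink $U$ to lie in $\Dom(\sigma)$ and make $\sigma|_U$ a homeomorphism. Setting $V=\sigma(U)$ one has $\sigma^n(V)=\sigma^{n+1}(U)$. For any purported $y\in\sigma^n(V)\cap\sigma^{-k}(\sigma^k(V))$ I would write $y=\sigma^{n+1}(u_1)$ with $u_1\in U$ and split on $k$: for $k\ge 1$, extracting $u_2\in U$ with $\sigma^{k+1}(u_2)=\sigma^k(y)$ shows $\sigma^n(u_1)\in\sigma^n(U)\cap\sigma^{-(k+1)}(\sigma^{k+1}(U))$; for $k\le 0$, the first item of Lemma~\ref{subset lemma} reduces the condition to the $k=0$ case, producing $\sigma^n(u_1)\in\sigma^n(U)\cap\sigma^{-1}(\sigma(U))$. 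Either contradicts the wandering of $x$.

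The $(\sigma,\sigma^{-1})$-invariance of $\overline{\W}$ then follows by continuity: for $\sigma(\overline{\W}\cap\Dom(\sigma))\subseteq\overline{\W}$, approximate $y$ by a net in $\W$ and push forward using continuity together with the $\sigma$-invariance of $\W$; for $\sigma^{-1}(\overline{\W})\subseteq\overline{\W}$, given $y\in\Dom(\sigma)$ with $\sigma(y)\in\overline{\W}$, take a neighborhood where $\sigma$ admits a local inverse and pull back an approximating net through it, landing in $\sigma^{-1}(\W)\subseteq\W$. The main obstacle is the bookkeeping of index shifts and of the sign of $k$ in the two core steps: the operator $\sigma^{-k}\circ\sigma^k$ behaves quite differently for $k\ge 0$ versus $k\le 0$, and one must ensure that the contradictions produced fall within the quantifier range $n\ge 1,\,k\in\Z$ of the original wandering condition.
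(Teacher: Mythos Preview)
Your proposal is correct and follows essentially the same route as the paper's own proof: both arguments first establish the $(\sigma,\sigma^{-1})$-invariance of $\W$ via the same index-shifting contradictions (the paper handles the $\sigma^{-1}$-invariance step uniformly in $k$ rather than splitting cases, but the underlying computation is identical), then invoke Lemma~\ref{lema_rest} for $\Omega$ and a continuity/approximation argument for $\overline{\W}$. The only cosmetic differences are your explicit mention of monotonicity of the wandering condition when shrinking the witness, and your use of nets rather than the neighborhood formulation of closure; neither changes the substance.
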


\begin{proof}

We first show that \(\sigma^{-1}(\W) \subseteq \W\). Let \(x \in \sigma^{-1}(\W)\), and let \(V'\) be an open neighborhood of \(\sigma(x)\) such that \(\sigma^{m}(V') \cap \sigma^{-p}(\sigma^p(V')) = \emptyset\) for each \(m \geq 1\) and \(p \in \mathbb{Z}\). Since \(\sigma\) is a local homeomorphism and \(\Dom(\sigma)\) is open, there exists an open neighborhood \(U\) of \(x\) with \(U \subseteq \Dom(\sigma)\) and an open neighborhood \(V\) of \(\sigma(x)\) such that \(\sigma(U) = V \subseteq V'\). In particular, \(\sigma^{m}(V) \cap \sigma^{-p}(\sigma^p(V)) = \emptyset\) for each \(m \geq 1\) and \(p \in \mathbb{Z}\). We now show that \(U\) satisfies the wandering condition for \(x\).

Suppose that there exist \(n \geq 1\) and \(k \in \mathbb{Z}\) such that \(\sigma^n(U) \cap \sigma^{-k}(\sigma^k(U)) \neq \emptyset\). Let \(z \in \sigma^n(U) \cap \sigma^{-k}(\sigma^k(U))\). From the second item of Lemma \ref{subset lemma}, we have \(z \in \Dom(\sigma)\). Then \(\sigma(z) \in \sigma^{n+1}(U)\) (since \(z \in \sigma^n(U)\)) and \(\sigma(z) \in \sigma^{-k+1}(\sigma^k(U))\), as \(z \in \sigma^{-k}(\sigma^k(U))\)). Since \(V = \sigma(U)\), we get
\[
\sigma(z) \in \sigma^{n+1}(U) \cap \sigma^{-k+1}(\sigma^k(U)) = \sigma^n(V) \cap \sigma^{-k+1}(\sigma^{k-1}(V)),
\]
so \(\sigma^n(V) \cap \sigma^{-k+1}(\sigma^{k-1}(V)) \neq \emptyset\). This leads to a contradiction, since \(V \subseteq V'\) and \(\sigma^m(V') \cap \sigma^{-p}(\sigma^p(V')) = \emptyset\) for each \(m \geq 1\) and \(p \in \mathbb{Z}\). Therefore, we conclude that \(\sigma^{-1}(\W) \subseteq \W\).

Next, we show that \(\sigma(\W) \subseteq \W\).

Let \(x \in \W\). Choose \(U\) as an open neighborhood of \(x\) that satisfies the wandering condition for \(x\) and such that \(\sigma|_U\) is a homeomorphism. We will show that \(\sigma(U)\) is an open neighborhood of \(\sigma(x)\) that satisfies the wandering condition for \(\sigma(x)\).

Suppose there exist \(n \geq 1\) and \(k \in \mathbb{Z}\) such that 
\[
\sigma^n(\sigma(U)) \cap \sigma^{-k}(\sigma^k(\sigma(U))) \neq \emptyset.
\]

If \(k \geq 0\), then by applying \(\sigma^k\) to a point in the intersection above and commuting \(\sigma^n\) with \(\sigma\), we obtain that \(\sigma^{k+1}(\sigma^n(U)) \cap \sigma^{k+1}(U) \neq \emptyset\). This implies that \(\sigma^n(U) \cap \sigma^{-k-1}(\sigma^{k+1}(U)) \neq \emptyset\), which is a contradiction.

We are left with the case \(k < 0\). For such \(k\), note that \(\sigma^{-k}(\sigma^k(\sigma(U))) \subseteq \sigma(U)\). Since we know that \(\sigma^n(\sigma(U)) \cap \sigma^{-k}(\sigma^k(\sigma(U))) \neq \emptyset\), the previous observation allows us to conclude that \(\sigma^n(\sigma(U)) \cap \sigma(U) \neq \emptyset\). Hence, \(\sigma(\sigma^n(U)) \cap \sigma(U) \neq \emptyset\), and therefore \(\sigma^n(U) \cap \sigma^{-1}(\sigma(U)) \neq \emptyset\), which is a contradiction.

Therefore, \(\sigma(x) \in \W\), and consequently \(\sigma(\W) \subseteq \W\).

Since \(\W\) is \(\sigma\)-invariant and \(\sigma^{-1}(\W) \subseteq \W\), it follows from Item 1 of Lemma~\ref{lema_rest} that \(\Omega\) is \(\sigma\)-invariant and \(\sigma^{-1}(\Omega) \subseteq \Omega\).

To complete the proof of the proposition, we show that \(\overline{\W}\) is \((\sigma, \sigma^{-1})\)-invariant.

We begin by proving that \(\overline{\W}\) is \(\sigma\)-invariant. Let \(x \in \overline{\W} \cap \Dom(\sigma)\), and let \(U\) be an open neighborhood of \(\sigma(x)\). Then \(\sigma^{-1}(U) \cap \Dom(\sigma)\) is an open neighborhood of \(x\). Since \(x \in \overline{\W}\), there exists \(w \in \W \cap (\sigma^{-1}(U) \cap \Dom(\sigma))\). Hence, as \(\W\) is \(\sigma\)-invariant, we obtain that \(\sigma(w) \in \W \cap U\), and thus \(\sigma(x) \in \overline{\W}\).

To show that \(\overline{\W}\) is \(\sigma^{-1}\)-invariant, let \(x \in \sigma^{-1}(\overline{\W})\) and let \(U\) be an open neighborhood of \(x\). Without loss of generality, we may assume that \(U \subseteq \Dom(\sigma)\) and that \(\sigma|_U\) is a homeomorphism. Then \(\sigma(U)\) is an open neighborhood of \(\sigma(x)\). Since \(\sigma(x) \in \overline{\W}\), there exists \(y \in \W \cap \sigma(U)\). Let \(u \in U\) be such that \(\sigma(u) = y\). As \(\W\) is \(\sigma^{-1}\)-invariant, we conclude that \(u \in \W\) and hence \(x \in \overline{\W}\).
\end{proof}

\begin{corollary}\label{restricted systems}
    Let $(X,\sigma)$ be a DR system. Then the restricted systems $(\Omega,\sigma_{|_\Omega} )$, $(\W, \sigma_{|_\W})$, and $(\overline{\W}, \sigma_{|_{\overline{\W}}} )$ are all DR systems.
\end{corollary}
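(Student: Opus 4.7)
The plan is to apply Proposition~\ref{restriction} directly to each of the three sets $\Omega$, $\W$, and $\overline{\W}$. That proposition says that whenever $Y \subseteq X$ is $(\sigma, \sigma^{-1})$-invariant, the pair $(Y, \sigma_{|_Y})$ is a DR system. Proposition~\ref{invariantshallbe} already provides exactly the hypothesis that each of $\Omega$, $\W$, and $\overline{\W}$ is $(\sigma,\sigma^{-1})$-invariant, so the corollary should follow almost immediately.

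The only thing to be careful about is verifying that each $Y \in \{\Omega, \W, \overline{\W}\}$, equipped with the induced topology, is itself a locally compact Hausdorff space, since this is the standing assumption on the underlying space of a DR system. Hausdorffness is inherited from $X$ by any subspace, so there is nothing to check there. For local compactness, I would note that $\Omega$ and $\overline{\W}$ are closed subsets of $X$ by Proposition~\ref{omega_closed} (and by definition of closure), and closed subsets of a locally compact Hausdorff space are locally compact. The set $\W$ is open in $X$ (this is shown in the proof of Proposition~\ref{omega_closed}), and open subsets of a locally compact Hausdorff space are also locally compact.

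With local compactness and Hausdorffness in hand for each $Y$, and $(\sigma,\sigma^{-1})$-invariance supplied by Proposition~\ref{invariantshallbe}, the second item of Lemma~\ref{lema_rest} guarantees that $\sigma_{|_Y}(Y \cap \Dom(\sigma))$ is open in $Y$, so the restricted map is a local homeomorphism between open subsets of $Y$. Thus Proposition~\ref{restriction} applies and yields that $(\Omega,\sigma_{|_\Omega})$, $(\W,\sigma_{|_\W})$, and $(\overline{\W}, \sigma_{|_{\overline{\W}}})$ are DR systems.

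I do not anticipate a real obstacle here; the corollary is essentially a packaging of the previous proposition and the invariance result, and the only technicality is the quick verification of local compactness for the three subspaces, which splits cleanly into the closed case (for $\Omega$ and $\overline{\W}$) and the open case (for $\W$).
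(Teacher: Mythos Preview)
Your proposal is correct and follows the same approach as the paper, which simply cites Propositions~\ref{invariantshallbe} and~\ref{restriction}. Your additional check of local compactness for the three subspaces (closed for $\Omega$ and $\overline{\W}$, open for $\W$) is a welcome bit of extra care that the paper leaves implicit.
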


\begin{proof} This follows directly from Propositions~\ref{invariantshallbe} and \ref{restriction}.
\end{proof}

\subsection{Nonwandering sets in subshifts over countable alphabets}\label{shifts}

In this section, we characterize wandering and non-wandering points in subshifts over a countable alphabet. Due to the close relationship between DR systems and C*-algebras, we adopt the definition of subshifts introduced by Ott, Tomforde, and Willis \cite{OTW}, which has been further explored in works such as \cite{BGGV3, BGGV, gonccalves2024socle,GSS2s}. We refer to these as OTW-subshifts, noting that they coincide with the traditional notion of shift spaces when the alphabet is finite. For the reader's convenience, we briefly recall the definition of the OTW-subshift space and its associated DR system below. For more details, see \cite{BGGV, OTW}.

Let $\alf$ be a countable alphabet. Define $\tilde{\alf}$ as 
\[
\tilde{\alf} := \begin{cases}
\alf & \text{if } \alf \text{ is finite,} \\
\alf \cup \{\infty\} & \text{if } \alf \text{ is infinite, where } \infty \text{ is a new symbol not in } \alf.
\end{cases}
\]
Let
\[
\Sigma_\alf := \{(x_i)_{i \in \mathbb{N}} \in \tilde{\alf}^\mathbb{N} : x_i = \infty \text{ implies } x_{i+1} = \infty\},
\]
$\Sigma_\alf^{\text{inf}} := \alf^\mathbb{N}$, and $\Sigma_\alf^{\text{fin}} := \Sigma_\alf \setminus \Sigma_\alf^{\text{inf}}$. When the alphabet is infinite, the set $\Sigma_\alf^{\text{fin}}$ is identified with all finite sequences over $\alf$ via the identification
\[
(x_0 x_1 \dots x_k \infty \infty \infty \dots) \equiv (x_0 x_1 \ldots x_k).
\]
The sequence $(\infty \infty \infty \ldots)$ is denoted by $w$ and is identified with the empty word in $\alf^0$. A block of an infinite sequence $x = (x_i)_{i \in \mathbb{N}} \in \alf^{\mathbb{N}}$ is a finite string $x_k \ldots x_j$ contained in $x$. Let $F \subseteq \alf^* = \bigcup_{k=0}^{\infty} \alf^k$. We define
\begin{align*}
\osf^{\text{inf}}_F &:= \{ x \in \alf^\mathbb{N} : \text{no block of } x \text{ is in } F \}, \\
\osf^{\text{fin}}_F &:= \{ x \in \Sigma_\alf^{\text{fin}} : \text{there are infinitely many } a \in \alf \text{ for which } \\
& \qquad \qquad \quad \ \text{there exists } y \in \alf^\mathbb{N} \text{ such that } xay \in \osf^{\text{inf}}_F \};\\
\osf^{\text{fin}^*}_F &:= \osf^{\text{fin}}_F \setminus \{w\}.
\end{align*}

The OTW-subshift associated with $F$ is defined as $\osf_F^{\text{OTW}} := \osf^{\text{inf}}_F \cup \osf^{\text{fin}}_F$ together with the shift map $\sigma: \osf_F^{\text{OTW}} \to \osf_F^{\text{OTW}}$, where the shift map $\sigma$ is defined by
\[
\sigma(x) = \begin{cases}
x_1x_2\ldots, & \text{if } x = x_0x_1x_2\ldots \in \osf_F^{\text{inf}}, \\
x_1\ldots x_{n-1}, & \text{if } x = x_0\ldots x_{n-1} \in \osf_F^{\text{fin}} \text{ and } n \geq 2, \\
w, & \text{if } x = x_0 \in \osf_F^{\text{fin}}\cup\{w\}.
\end{cases}
\]

When \( F = \emptyset \), the OTW-subshift \(\osf_F^{\text{OTW}}\) equals \(\Sigma_\alf\), which we refer to as the full shift. For simplicity, when the context is clear, we denote \(\osf_F^{\text{OTW}}\) simply as \(\osf^{\text{OTW}}\).

Note that the shift map \(\sigma\) is continuous everywhere except possibly at \( w \). Therefore, it is natural to consider the shift map as a partially defined map, where \(\operatorname{Dom}(\sigma)\) consists of all elements of \(\osf^{\text{OTW}}\) with length greater than zero; that is, \(\operatorname{Dom}(\sigma) = \osf^{\text{inf}}\cup\osf^{\text{fin}^*}\). In the case of the full shift, \((\Sigma_\alf, \sigma)\) is a DR system. It remains an open question to characterize the subshifts \(\osf^{\text{OTW}}\) for which \((\osf^{\text{OTW}}, \sigma)\) forms a DR system.

To conclude this brief introduction to OTW-subshifts, we recall that the topology on \(\osf^{\text{OTW}}\) has a basis consisting of generalized cylinders, defined as follows.

For a finite set \( F \subseteq \alf \) and \( \alpha \in \cup_{k=0}^\infty \alf^k\) the associated generalized cylinder set is defined as
\[
\CZ(\alpha, F) := \{y \in \osf^{\text{OTW}} : y_i = \alpha_i \ \forall \, 1 \leq i \leq |\alpha|, \ y_{|\alpha|+1} \notin F\}.
\]

\begin{remark}
OTW-subshifts can also be characterized as closed, invariant subsets that satisfy the infinite extension property. Moreover, OTW-subshifts are metric spaces, see \cite{OTW}.
\end{remark}

Next, we characterize nonwandering and wandering elements in OTW-subshifts that are DR systems. To do so, we first define irrational and rational sequences. This terminology is inspired by the one used in \cite{Irreduciblerepresentations} for irrational paths.

\begin{definition}
    Let \((X, \sigma)\) be an OTW shift space over an arbitrary alphabet. An infinite sequence \(x \in X\) is called rational (or eventually periodic) if there exist finite words \(\alpha\) and \(\beta\) with \(|\alpha| \geq 0\) and \(|\beta| \geq 1\) such that \(x = \alpha \beta^\infty\). An infinite sequence that is not rational is called irrational.
\end{definition}

\begin{proposition}\label{example_shift_spaces}
    Let \((X, \sigma)\) be an OTW subshift space over a countable alphabet \(A\), such that \((X, \sigma)\) is a DR system. Then the following hold:
    \begin{enumerate}
        \item\label{item1ex} Every rational element of \(X\) belongs to \(\Omega\).
        
        \item\label{item2ex} Let \(z \in X\) be such that every open neighborhood of \(z\) contains a rational element. Then \(z \in \Omega\).
        
        \item\label{item3ex} Every irrational infinite sequence in \(X\) that is an isolated point of \(X\) belongs to \(\W\).
        
        \item\label{item4ex} Either all elements of finite length belong to \(\Omega\), or all elements of finite length belong to \(\W\). Moreover, it suffices to determine the classification of the empty word to determine the behavior of all finite-length elements.
    \end{enumerate}
\end{proposition}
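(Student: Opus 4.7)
The plan is to handle the four items in order, since each later item can either use its predecessors or be reduced to one of the structural results already proved. For item~\ref{item1ex}, I would apply Remark~\ref{fixed points are nonwandering} directly: a rational element $x = \alpha\beta^\infty$ (with $|\beta| \geq 1$) is an infinite sequence, hence lies in $\Dom(\sigma^m)$ for every $m \geq 0$, and since $\beta^\infty$ is fixed by the $|\beta|$-th iterate of the shift, one has $\sigma^{|\alpha|}(x) = \beta^\infty = \sigma^{|\alpha|+|\beta|}(x)$, so the remark immediately yields $x \in \Omega$. For item~\ref{item2ex}, the cleanest route is to combine item~\ref{item1ex} with Proposition~\ref{omega_closed}: the hypothesis says that $z$ lies in the closure of the set of rational elements, which by item~\ref{item1ex} is contained in $\Omega$, and $\Omega$ is closed. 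A direct alternative is to fix an open neighborhood $U$ of $z$, pick a rational $y \in U$, and apply the nonwandering property of $y$ to the neighborhood $U$; this gives $n \geq 1$ and $k \in \Z$ with $\sigma^n(U) \cap \sigma^{-k}(\sigma^k(U)) \neq \emptyset$, which is exactly what is required for $z \in \Omega$.

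For item~\ref{item3ex}, I would first record the characterization that $z$ is irrational if and only if $\sigma^i(z) \neq \sigma^j(z)$ for all $0 \leq i < j$, since such an equality would make $\sigma^i(z)$ periodic and $z$ eventually periodic. Because $z$ is isolated, $\{z\}$ is an open neighborhood of $z$, so it suffices to check that $\sigma^n(\{z\}) \cap \sigma^{-k}(\sigma^k(\{z\})) = \emptyset$ for every $n \geq 1$ and $k \in \Z$; note that $\sigma^n(\{z\}) = \{\sigma^n(z)\}$ as $z$ is infinite. For $k \geq 0$, the set $\sigma^{-k}(\sigma^k(\{z\}))$ consists of those $y \in \Dom(\sigma^k)$ with $\sigma^k(y) = \sigma^k(z)$, and $\sigma^n(z)$ fails this condition because $\sigma^{k+n}(z) \neq \sigma^k(z)$ by irrationality. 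For $k < 0$, Lemma~\ref{subset lemma}(1) gives $\sigma^{-k}(\sigma^k(\{z\})) = \{z\} \cap \Img(\sigma^{-k}) \subseteq \{z\}$, and $\sigma^n(z) \neq z$ again by irrationality.

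For item~\ref{item4ex}, I would rely on the $(\sigma,\sigma^{-1})$-invariance of both $\Omega$ and $\W$ proved in Proposition~\ref{invariantshallbe}. Every finite-length element $\alpha$ of length $k \geq 0$ satisfies $\sigma^k(\alpha) = w$, so $\alpha \in \sigma^{-k}(\{w\})$. Iterating $\sigma^{-1}$-invariance yields $\sigma^{-k}(\Omega) \subseteq \Omega$ and $\sigma^{-k}(\W) \subseteq \W$, so whenever $w$ belongs to one of these two sets, so does $\alpha$; the reverse implication is trivial since $w$ itself has finite length. Because $\Omega$ and $\W$ partition $X$, this establishes the dichotomy and reduces the classification of all finite-length elements to that of the empty word. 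The step I expect to be the main obstacle is item~\ref{item3ex}, which requires splitting by the sign of $k$, invoking Lemma~\ref{subset lemma} to simplify the inverse-image expressions, and carefully translating irrationality into the absence of coincidences $\sigma^i(z) = \sigma^j(z)$; the remaining items follow essentially formally from Remark~\ref{fixed points are nonwandering}, Proposition~\ref{omega_closed}, and Proposition~\ref{invariantshallbe}.
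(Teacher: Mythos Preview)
Your proposal is correct and follows essentially the same route as the paper: item~\ref{item1ex} via Remark~\ref{fixed points are nonwandering}, item~\ref{item2ex} via item~\ref{item1ex} and the closedness of $\Omega$, item~\ref{item3ex} by taking $U=\{z\}$, splitting on the sign of $k$, and invoking Lemma~\ref{subset lemma}, and item~\ref{item4ex} via the $\sigma^{-1}$-invariance of $\Omega$ and $\W$. The only cosmetic differences are that for item~\ref{item1ex} the paper first shows $\beta^\infty\in\Omega$ and then pulls back by $\sigma^{-1}$-invariance rather than applying the remark directly to $x$, and for item~\ref{item3ex} the paper argues via explicit prefix decompositions instead of your equivalent reformulation ``$\sigma^i(z)\neq\sigma^j(z)$ for all $i<j$''.
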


\begin{proof}      
   
To prove the first item, let \( x \in X \) be a rational element, i.e., \( x = ab^\infty \), where \( a \) and \( b \) are finite words. Let \( m = |b| \). Since \( \sigma^m(b^\infty) = b^\infty \), it follows from Remark~\ref{fixed points are nonwandering} that \( b^\infty \in \Omega \). By Proposition~\ref{invariantshallbe}, \(\Omega\) is \(\sigma^{-1}\)-invariant, and therefore \( x \in \Omega \).

The second item follows directly from the first item and from the fact that \(\Omega\) is closed (see Proposition~\ref{omega_closed}).

To prove the third item, suppose that \( x \) is an irrational element. Let \( U = \{x\} \), and let \( n \in \mathbb{N} \) and \( k \in \mathbb{Z} \). Write \( x = a y \) where \( |a| = n \), so that \( \sigma^n(x) = y \). If we assume that \( \sigma^n(U) \cap \sigma^{-k}(\sigma^k(U)) \neq \emptyset \), then \( y \in \sigma^{-k}(\sigma^k(\{x\})) \).

For \( k \leq 0 \), by the first item of Lemma~\ref{subset lemma}, we have \( \sigma^{-k}(\sigma^k(\{x\})) \subseteq \{x\} \), implying that \( y = x \). Thus, \( y = x = ay \), which means \( y = a^\infty \), a contradiction since \( x \) is irrational.

For \( k \geq 1 \), \( y \in \sigma^{-k}(\sigma^k(\{x\})) \) implies that \( \sigma^k(y) = \sigma^k(x) \), and consequently \( y = bz \) and \( x = cz \), where \( |b| = k = |c| \). Therefore, \( cz = x = ay = abz \). Since \( |ab| > |c| \), the equality \( cz = abz \) implies \( z = dz \) where \( |d| \geq 1 \). From \( z = dz \), we get \( z = d^\infty \), which is impossible since \( x = cz \) is irrational. Thus, \( x \in \W \).

The last item follows from the fact that \(\Omega\) and \(\W\) are \(\sigma^{-1}\)-invariant (see Proposition~\ref{invariantshallbe}).

\end{proof}

We conclude this section with several examples that apply the result above to illustrate possible wandering and non-wandering sets. In particular, we present examples where the empty word is wandering and where it is non-wandering. Additionally, we provide examples to demonstrate that the converses of Items 2 and 3 of the proposition above are not necessarily true.

\begin{example} 
In the full shift $(X, \sigma)$ over an arbitrary alphabet $A$, which is a DR system, every point is nonwandering. This follows from the second item of Proposition~\ref{example_shift_spaces}.
\end{example}

\begin{example}\label{OTW example} 
Consider the following subshift of the OTW-full shift over $\mathbb{N}$. Define the infinite sequence $p = 1234\ldots$, and let $Y = \{\sigma^k(p) : k \in \mathbb{N} \cup \{0\}\}$. Now define $X = Y \cup \{w\}$, where $w$ is the empty sequence. Notice that $X$ is a subshift of $\Sigma_{\mathbb{N}}$, and that $(X, \sigma)$ is a DR system.

We compute the wandering set $\W$ and the non-wandering set $\Omega$ of $(X, \sigma)$.

To see that $w$ is nonwandering, let $U$ be an open set in $X$ containing $w$. Since $U$ is an infinite set, there exist $x, y \in Y \cap U$ such that $y = \sigma^n(x)$ for some $n \in \mathbb{N}$. Notice that $\sigma^{-0}(\sigma^0(U)) = U$, so $y \in \sigma^n(U) \cap \sigma^{-0}(\sigma^0(U))$, and hence $\sigma^n(U) \cap \sigma^{-0}(\sigma^0(U)) \neq \emptyset$. Therefore, $w \in \Omega$.

By Item 3 of Proposition~\ref{example_shift_spaces}, all elements of $Y$ are wandering.

Thus, we obtain that $\Omega = \{w\}$ and $\W = Y$.

\end{example}

\begin{remark}
    In Proposition~\ref{omega_closed}, we proved that the nonwandering set $\Omega$ is always closed. The above example shows that it is not necessarily open (hence the wandering set $\W$ is not always closed). 
\end{remark}

\begin{example}\label{contraexitem2}
Let $\alf = \mathbb{N}$ and 
\[
\osf = \left\{ (x_i) : x_{i+1} = x_i + 1 \right\} \cup \left\{ (x_i) : \exists! \, j \text{ such that } x_{j+1} = 2x_j + 1 \text{ and } x_{i+1} = x_i + 1 \, \forall i \neq j \right\}.
\]
Consider the associated OTW-subshift, which is $\osf^{OTW}= \osf \cup \{ w \}$.
Notice that $(\osf^{OTW},\sigma)$ is a DR system. Moreover, $X$ can be seen as the set of infinite paths in the labeled graph below. 

\begin{center}
\hspace{-6cm}\setlength{\unitlength}{2mm}
	\begin{picture}(30,25)
		\put(2,0){\circle*{0.7}}
		\put(12,0){\circle*{0.7}}
		\put(22,0){\circle*{0.7}}
		\put(32,0){\circle*{0.7}}
		\put(42,0){\circle*{0.7}}
		\put(9,7){\circle*{0.7}}
		\put(19,7){\circle*{0.7}}
		\put(29,7){\circle*{0.7}}
		\put(39,7){\circle*{0.7}}
            \put(16,14){\circle*{0.7}}
		\put(26,14){\circle*{0.7}}
		\put(36,14){\circle*{0.7}}
		\put(46,14){\circle*{0.7}}
            \put(23,21){\circle*{0.7}}
		\put(33,21){\circle*{0.7}}
		\put(43,21){\circle*{0.7}}
		\put(53,21){\circle*{0.7}}

      	\put(45,0){$\ldots $}
    	\put(50,10){$\ldots $}
		\put(24,22){\reflectbox{$\ddots$}}
		\put(34,22){\reflectbox{$\ddots$}}
		\put(44,22){\reflectbox{$\ddots$}}
		\put(54,22){\reflectbox{$\ddots$}}

		\put(2,0){\vector(1,0){10}}	
		\put(12,0){\vector(1,0){10}}	
		\put(22,0){\vector(1,0){10}}	
		\put(32,0){\vector(1,0){10}}	

            \put(2,0){\vector(1,1){7}}	
		\put(12,0){\vector(1,1){7}}	
		\put(22,0){\vector(1,1){7}}	
		\put(32,0){\vector(1,1){7}}
    
            \put(9,7){\vector(1,1){7}}	
		\put(19,7){\vector(1,1){7}}	
		\put(29,7){\vector(1,1){7}}	
		\put(39,7){\vector(1,1){7}}

            \put(16,14){\vector(1,1){7}}	
		\put(26,14){\vector(1,1){7}}	
		\put(36,14){\vector(1,1){7}}	
		\put(46,14){\vector(1,1){7}}
 
		\put(7,-1.8){\normalsize$1$}
		\put(17,-1.8){\normalsize$2$}
		\put(27,-1.8){\normalsize$3$}
		\put(37,-1.8){\normalsize$4$}
		\put(4,4){\normalsize$1$}
		\put(14,4){\normalsize$2$}
		\put(24,4){\normalsize$3$}
		\put(34,4){\normalsize$4$}
            \put(11,11){\normalsize$3$}
		\put(21,11){\normalsize$5$}
		\put(31,11){\normalsize$7$}
		\put(41,11){\normalsize$9$}
            \put(18,18){\normalsize$4$}
		\put(28,18){\normalsize$6$}
		\put(38,18){\normalsize$8$}
		\put(48,18){\normalsize$10$}
		
		\put(16,-4.2){\normalsize  Labeled graph $\mathcal{E}$}
\end{picture}
\end{center}
\vspace{1cm}

We first establish that the element $x = 12345\ldots \in \textsf{X}$ is nonwandering. To prove this, for every $n \geq 1$, consider $V_n = Z[123\ldots n]$. Then, the sequences $(n+j)_{j=1}^{\infty}$ and $(2n+j)_{j=1}^{\infty}$ belong to $\sigma^n(V_n)$. Therefore, $(n+j)_{j=1}^{\infty} \in \sigma^n(V_n) \cap \sigma^{-n}(\sigma^n(V_n))$. Since every open neighborhood of $x$ contains cylinders of the form $V_n$, we conclude that $x \in \Omega$.

Now, notice that every element of $\osf$ is in the extended orbit of $x$, that is, every element of $\osf$ belongs to $\bigcup_{k,n = 0,1,\ldots} \sigma^{-n}(\sigma^k(x))$. Since by Proposition~\ref{invariantshallbe}, $\Omega$ is $(\sigma,\sigma^{-1})$-invariant, and $x \in \Omega$, we conclude that $\osf$ is contained in $\Omega$. Finally, as $\Omega$ is closed, we obtain that $w \in \Omega$. Hence, $\textsf{X} = \Omega$.

\end{example}

\begin{remark}
Observe that all infinite elements in Example~\ref{contraexitem2} are irrational. Moreover, this example shows that even for infinite irrational elements, the converse of Item~\ref{item2ex} in Proposition~\ref{example_shift_spaces} does not hold.
\end{remark}

\begin{example}\label{emptywandering}
    
    Let $\alf = \mathbb{N}$ and consider the following subshift of the OTW-full shift. Define $p = 1010^210^310^4\ldots$, and let  
\[
\osf = \{\sigma^n(p) : n = 0,1,2,\ldots \} \cup \{ 0^n 1 0^\infty : n = 0, 1, 2, \ldots \} \cup \{ 0^\infty \} \cup \{ np : n \in \mathbb{N} \} \cup \{ w \}.
\]
Notice that $\osf$ is an OTW-subshift, and $(\osf, \sigma)$ is a DR system. We now compute the wandering and nonwandering sets.

From the first item of Proposition~\ref{example_shift_spaces}, we have $\{ 0^\infty \} \cup \{ 0^n 1 0^\infty : n = 0, 1, 2, \ldots \} \subseteq \Omega$. Moreover, from the third item of the same proposition, each element of the form $\sigma^n(p)$ or $np$ belongs to the wandering set $\W$. The element $w$ also belongs to $\W$. 
To see this, consider the open set $U = \{ 2p, 3p, 4p, \ldots \} \cup \{ w \}$, which contains $w$. Notice that $\sigma^{-k}(\sigma^k(U)) = U \setminus \{ w \} \cup \{ 0p, 1p \}$ for each $k \geq 0$, and $\sigma^{-k}(\sigma^k(U)) = \emptyset$ for each $k < 0$. Since $\sigma^n(U) = \{ \sigma^{n-1}(p) \}$ for each $n \geq 1$, we get that $\sigma^n(U) \cap \sigma^{-k}(\sigma^k(U)) = \emptyset$ for all $n \geq 1$ and $k \in \mathbb{Z}$. 

Therefore, the nonwandering set is $\Omega = \{ 0^\infty \} \cup \{ 0^n 1 0^\infty : n = 0, 1, 2, \ldots \}$, and the wandering set is $\W = \osf \setminus \Omega$.

\end{example}

\begin{remark}
    The example above shows that elements of finite length can be wandering.
\end{remark}

\section{Entropy and the nonwandering set}\label{wanderentropy}

In this section, we show that the entropy of a DR system is the maximum of the entropies of the restricted systems on the nonwandering set and the closure of the wandering set. Furthermore, we show that for a DR system defined over a compact space with a clopen domain, the entropy is concentrated in the nonwandering set. We begin by proving that the entropy of a restricted system is always less than or equal to that of the original system.

\begin{proposition}\label{prop:entropy-restriction}
Let $(X,\sigma)$ be a DR system, and $(Y,\sigma_{|_{Y}})$ be a restricted DR system. Then, $h_d(\sigma)\geq h_d( \sigma_{|_{Y}})$. 
\end{proposition}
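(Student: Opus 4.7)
The plan is to prove the inequality at the level of the separated-set counts, and then pass to entropy by monotonicity of $\limsup$, $\lim$, and $\sup$. The central observation is that the relevant $d_n$-metric and the domain towers are compatible with the restriction.

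First, I would check that, for $x \in Y$ and $n \in \mathbb{N}$, one has $I_n^{\sigma_{|_Y}}(x) = I_n^{\sigma}(x)$. This is because (using that the restricted system is a DR system, so $\sigma_{|_Y}$ maps $Y \cap \Dom(\sigma)$ into $Y$) $\Dom(\sigma_{|_Y}^i) = Y \cap \Dom(\sigma^i)$ by an immediate induction, and for $x \in Y$ the condition $x \in Y \cap \Dom(\sigma^i)$ reduces to $x \in \Dom(\sigma^i)$. Moreover, $\sigma_{|_Y}^i(x) = \sigma^i(x)$ on this common domain. Consequently, for $x, y \in Y$, the two dynamical metrics agree: $d_n^{\sigma_{|_Y}}(x,y) = d_n^{\sigma}(x,y)$.

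Next, fix a compact $K \subseteq Y$ (which is automatically compact in $X$), $\varepsilon > 0$ and $n \in \mathbb{N}$. Let $F \subseteq \Dom(\sigma_{|_Y}^{n-1}) = Y \cap \Dom(\sigma^{n-1})$ be closed in $Y$, and let $A$ be an $(n,\varepsilon,\sigma_{|_Y}, K \cap F)$-separated set. For every finite subset $A' \subseteq A$, the set $A'$ is finite, hence closed in $X$, and satisfies $A' \subseteq \Dom(\sigma^{n-1})$. Taking $A'$ itself as the closed set appearing in Definition~\ref{papagaio} (for the full system $\sigma$), we have $A' \subseteq K \cap A'$, and by the first step, $A'$ is $(n,\varepsilon,\sigma, K \cap A')$-separated. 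Hence
\[
|A'| \leq sep(n,\varepsilon, \sigma, K \cap A') \leq ssep(n,\varepsilon,\sigma,K).
\]
Letting $A'$ range over all finite subsets of $A$ yields $|A| \leq ssep(n,\varepsilon,\sigma,K)$ (this handles both the finite and the infinite case uniformly). Taking the supremum over $A$ and then over $F$ gives
\[
ssep(n,\varepsilon,\sigma_{|_Y},K) \leq ssep(n,\varepsilon,\sigma,K).
\]

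Finally, I would apply $\tfrac{1}{n}\log(\cdot)$, take $\limsup_{n\to\infty}$, then $\lim_{\varepsilon\to 0}$, and finally the supremum over compact $K \subseteq Y$ (noting again that such $K$ are also compact in $X$), to conclude $h_d(\sigma_{|_Y}) \leq h_d(\sigma)$. I do not expect any substantial obstacle: the only subtle point is that a closed subset $F$ of $Y$ need not be closed in $X$, which could be an issue when transporting the supremum appearing in the definition of $ssep$ from the restricted to the ambient system; this is sidestepped by replacing $F$ by the finite separated sets themselves, which are trivially closed in $X$.
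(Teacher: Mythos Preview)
Your proposal is correct and follows essentially the same route as the paper's proof: both use the finite separated set itself as the closed subset of $\Dom(\sigma^{n-1})$ in the ambient system to transport the $ssep$ count. Your version is slightly more explicit (verifying that $I_n$ and $d_n$ agree under restriction, and handling potential infiniteness of $A$ via finite subsets), but the argument is the same.
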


\begin{proof}

Let \( K \) be a compact subset of \( Y \). Then, \( K \) is also a compact set in \( X \). Let \( F \subseteq \operatorname{Dom}(\sigma_{|_{Y}}^{n-1}) \) be a closed set, and let \( A \subseteq K \cap F \) be an \((n, \varepsilon, \sigma_{|_{Y}}, K \cap F)\)-separated set. Since \( A \) is finite, it is closed in \( \operatorname{Dom}(\sigma^{n-1}) \), and therefore \( A \) is also an \((n, \varepsilon, \sigma, K \cap A)\)-separated set. Thus, we have
\[
\operatorname{sep}(n, \varepsilon, \sigma_{|_{Y}}, K \cap F) \leq \operatorname{sep}(n, \varepsilon, \sigma, K \cap A),
\]
and since $
\operatorname{sep}(n, \varepsilon, \sigma, K \cap A) \leq \operatorname{ssep}(n, \varepsilon, \sigma, K),
$
we obtain
\[
\operatorname{ssep}(n, \varepsilon, \sigma_{|_{Y}}, K) \leq \operatorname{ssep}(n, \varepsilon, \sigma, K).
\]

Thus,
\[
h_{\varepsilon}(\sigma_{|_{Y}}, K, d) = \limsup_{n \to \infty} \frac{1}{n} \log \operatorname{ssep}(n, \varepsilon, \sigma_{|_{Y}}, K) \leq \limsup_{n \to \infty} \frac{1}{n} \log \operatorname{ssep}(n, \varepsilon, \sigma, K) = h_{\varepsilon}(\sigma, K, d).
\]

Now, taking the limit as \( \varepsilon \to 0 \), we obtain $
h_d(\sigma_{|_{Y}}, K) \leq h_d(\sigma, K)$, and from the definition of the entropy, we conclude that
\[
h_d(\sigma_{|_{Y}}) \leq h_d(\sigma).
\]
\end{proof}

If the space of a DR system can be expressed as a union of open invariant subsets, then the entropy of the entire system equals the maximum entropy among the corresponding restricted systems \cite[Proposition 5.3]{DDF}. Below, we observe that this result also holds when the space is a union of closed sets.

\begin{proposition}\label{max entropy}
    Let $(X, \sigma)$ be a DR system, and let $Y_i\subseteq X$ be closed $\sigma$-invariant subsets such that $(Y_i, \sigma_{|_{Y_i}})$ are restricted DR systems for $i\in \{1,...,k\}$, and such that $X=\bigcup\limits_{i=1}^k Y_i$. Then $h_d(\sigma)=\max\limits_{1\leq i\leq n}h_d(\sigma_{|_{Y_i}}).$
    \end{proposition}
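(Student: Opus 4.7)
The inequality $h_d(\sigma)\geq \max_i h_d(\sigma_{|_{Y_i}})$ is immediate from Proposition~\ref{prop:entropy-restriction}, since each $(Y_i,\sigma_{|_{Y_i}})$ is a restricted DR system. The substantive direction is $h_d(\sigma)\leq \max_i h_d(\sigma_{|_{Y_i}})$, and my plan is to obtain it by cutting a separated set along the finite cover.

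Fix a compact $K\subseteq X$, $\varepsilon>0$, and $n\in\N$. Since each $Y_i$ is closed, the intersections $K\cap Y_i$ are compact in $Y_i$. Let $F\subseteq \Dom(\sigma^{n-1})$ be closed and let $A\subseteq K\cap F$ be an $(n,\varepsilon,\sigma,K\cap F)$-separated set. First I would observe that, because $Y_i$ is $\sigma$-invariant and $\sigma_{|_{Y_i}}$ agrees with $\sigma$ on $\Dom(\sigma)\cap Y_i$, one has $\Dom(\sigma_{|_{Y_i}}^{\,n-1})=\Dom(\sigma^{n-1})\cap Y_i$ and $d_n$ computed with $\sigma$ agrees with $d_n$ computed with $\sigma_{|_{Y_i}}$ on pairs of points in $Y_i$. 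Consequently, $F\cap Y_i$ is a closed subset of $\Dom(\sigma_{|_{Y_i}}^{\,n-1})$ and each $A\cap Y_i$ is an $(n,\varepsilon,\sigma_{|_{Y_i}},(K\cap Y_i)\cap (F\cap Y_i))$-separated set. Since $X=\bigcup_{i=1}^k Y_i$ implies $A=\bigcup_{i=1}^k (A\cap Y_i)$, we get
\[
|A|\;\leq\;\sum_{i=1}^k |A\cap Y_i|\;\leq\;\sum_{i=1}^k \mathrm{ssep}(n,\varepsilon,\sigma_{|_{Y_i}},K\cap Y_i)\;\leq\;k\cdot\max_{1\leq i\leq k}\mathrm{ssep}(n,\varepsilon,\sigma_{|_{Y_i}},K\cap Y_i).
\]
Taking the supremum over $F$ and $A$ yields
\[
\mathrm{ssep}(n,\varepsilon,\sigma,K)\;\leq\;k\cdot\max_{1\leq i\leq k}\mathrm{ssep}(n,\varepsilon,\sigma_{|_{Y_i}},K\cap Y_i).
\]

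Taking $\tfrac{1}{n}\log$, the factor $k$ contributes $\tfrac{\log k}{n}\to 0$, so
\[
h_{\varepsilon}(\sigma,K,d)\;\leq\;\limsup_{n\to\infty}\max_{1\leq i\leq k}\frac{1}{n}\log \mathrm{ssep}(n,\varepsilon,\sigma_{|_{Y_i}},K\cap Y_i).
\]
At this point the one delicate step is to interchange the $\limsup$ with the finite maximum; for finitely many sequences $\limsup_n \max_i a_n^{(i)}=\max_i \limsup_n a_n^{(i)}$, since a subsequence realizing the left-hand side must, by pigeonhole, hit some fixed index $i^*$ infinitely often. Applying this, letting $\varepsilon\to 0$, and taking the supremum over compact $K\subseteq X$ gives
\[
h_d(\sigma)\;\leq\;\max_{1\leq i\leq k} h_d(\sigma_{|_{Y_i}}),
\]
which combined with the reverse inequality proves the proposition. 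The main obstacle I anticipate is purely bookkeeping: confirming that $\Dom(\sigma_{|_{Y_i}}^{\,n-1})=\Dom(\sigma^{n-1})\cap Y_i$ and that the $d_n$-metric is unaffected by passing to $\sigma_{|_{Y_i}}$ on points of $Y_i$, so that separated sets in the ambient system genuinely restrict to separated sets in each piece.
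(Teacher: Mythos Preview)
Your argument is correct and is precisely the standard splitting-of-separated-sets proof that the paper has in mind: the paper does not spell out a proof but defers to \cite[Proposition~5.3]{DDF}, whose argument proceeds along exactly the lines you describe (restricting an $(n,\varepsilon)$-separated set to each closed piece and absorbing the factor $k$ in the $\limsup$). Your bookkeeping remarks about $\Dom(\sigma_{|_{Y_i}}^{\,n-1})=\Dom(\sigma^{n-1})\cap Y_i$ and the agreement of $d_n$ are the right adaptations needed here, and the pigeonhole justification for swapping $\limsup$ with a finite $\max$ is fine.
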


\begin{proof} The proof follows similarly to the one for \cite[Proposition~5.3]{DDF}, and the necessary adaptations are straightforward.
\end{proof}

Applying the above proposition to the nonwandering set and the closure of the wandering set, we obtain the following result.

\begin{theorem}\label{entropias wandering e nonwandering}
Let \((X, \sigma)\) be a DR system, and let \((\Omega, \sigma_{|_{\Omega}})\) and \((\overline{\W}, \sigma_{|_{\overline{\W}}})\) be the restricted DR systems as described in Corollary \ref{restricted systems}. Then,

$$h_d(\sigma) = \max \{ h_d(\sigma_{|_{\Omega}}), h_d(\sigma_{|_{\overline{\W}}}) \}.$$
\end{theorem}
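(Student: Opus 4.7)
The proof should be nearly immediate from the infrastructure already built up in the paper, so my plan is just to verify the hypotheses of \Cref{max entropy} for the pair of subsets $\Omega$ and $\overline{\W}$.

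First I would observe that $X = \Omega \sqcup \W \subseteq \Omega \cup \overline{\W} \subseteq X$, so $X = \Omega \cup \overline{\W}$. This covers the "covering" hypothesis of \Cref{max entropy} with $k=2$, $Y_1 = \Omega$, $Y_2 = \overline{\W}$.

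Next I would check the remaining hypotheses. By \Cref{omega_closed}, $\Omega$ is closed in $X$, and $\overline{\W}$ is closed by definition. By \Cref{invariantshallbe}, both $\Omega$ and $\overline{\W}$ are $(\sigma,\sigma^{-1})$-invariant; in particular they are $\sigma$-invariant. Finally, \Cref{restricted systems} guarantees that $(\Omega,\sigma_{|_\Omega})$ and $(\overline{\W},\sigma_{|_{\overline{\W}}})$ are genuine restricted DR systems, which is exactly the final hypothesis needed for \Cref{max entropy}.

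Applying \Cref{max entropy} directly yields
\[
h_d(\sigma) = \max\{h_d(\sigma_{|_\Omega}),\, h_d(\sigma_{|_{\overline{\W}}})\},
\]
which is the desired equality. There is no real obstacle here: the entire content of the theorem is assembled from the earlier lemmas and propositions, and the only thing worth writing down explicitly is the trivial equality $X=\Omega\cup\overline{\W}$ needed to invoke \Cref{max entropy}.
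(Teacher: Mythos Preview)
Your proposal is correct and matches the paper's approach exactly: the paper's proof simply cites \Cref{omega_closed} and \Cref{max entropy}, and your version just makes explicit the verification of the hypotheses (closedness, $\sigma$-invariance, and the fact that the restrictions are DR systems via \Cref{invariantshallbe} and \Cref{restricted systems}) together with the observation that $X=\Omega\cup\overline{\W}$.
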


\begin{proof}
The proof follows directly from Propositions~\ref{omega_closed} and \ref{max entropy}.
\end{proof}

Our next objective is to identify conditions that ensure the entropy of a DR system is concentrated in the nonwandering set. However, this is not always the case, as we show in the following example.

\begin{example}
Recall Example~\ref{entropias diferentes}, where \((\R, \sigma)\) represents the DR system with \(\sigma: \R \rightarrow \R\) the homeomorphism \(\sigma(x) = 2x\). We observed that \(\Omega = \{0\}\) and \(\W = \R \setminus \{0\}\). Therefore, \(h_d(\sigma_{|_{\Omega}}) = 0\), while \(h_d(\sigma) = \log(2)\).
\end{example}

\begin{remark}
    The example above also shows that, in general, it is not true that $h_d(\sigma_{|\overline{\W}})\leq h_d(\Omega)$.
\end{remark}

In certain cases, the entropy of the system restricted to the nonwandering set is the same as the entropy of the system restricted to the closure of the wandering set, see below.

\begin{example} 
Let \( X = [0,1] \) with the usual topology of \(\R\), and let \(\Dom(\sigma) = [0, \frac{1}{2})\) with \(\sigma(x) = 2x\). As shown in Example~\ref{exemplo_fecho_de_W}, we have \(\W = (0,1]\) and \(\Omega = \{0\}\). Notice that \( h_d(\sigma_{|_\Omega}) = 0\). We argue that \( h_d(\sigma) = 0 \).

First, note that \(\Dom(\sigma^n) = [0, \frac{1}{2^n})\) for each \( n \in \N \). Moreover, for any \( x, y \in \Dom(\sigma^{n-1}) \), it holds that \( d_n(x,y) = |\sigma^{n-1}(x) - \sigma^{n-1}(y)| \). Consequently, for any \( x, y \in \Dom(\sigma^{n-1}) \) and \(\varepsilon > 0\), we have \( d_n(x, y) \geq \varepsilon \) if and only if \( |\sigma^{n-1}(x) - \sigma^{n-1}(y)| \geq \varepsilon \).

Now, fix an \(\varepsilon > 0\) and \( n \in \N \). Let \( m \in \N \) be such that \(\frac{1}{m} \leq \varepsilon\). Let \( B \subseteq [0,1) \) be such that \( |u - v| \geq \varepsilon \) for each \( u, v \in B \). Then, \( B \) contains at most \( m \) elements (since \([0,1) = \bigcup_{i=1}^m [\frac{i-1}{m}, \frac{i}{m})\), and each interval \([\frac{i-1}{m}, \frac{i}{m})\) contains at most one element of \( B \)). 

Let \( F \subseteq \Dom(\sigma^{n-1}) \) be a closed subset, and let \( A \) be an \((n, \varepsilon, \sigma, F)\)-separated set of maximal cardinality. Define \( B = \sigma^{n-1}(A) \), and notice that, as per the first paragraph, \( |u - v| \geq \varepsilon \) for each \( u, v \in B \). Since the cardinality of \( B \) is at most \( m \), the cardinality of \( A \) is also at most \( m \). Thus, we have \( ssep(n, \varepsilon, \sigma, [0,1]) \leq m \), leading to \( h_\varepsilon(\sigma, [0,1], d) = 0 \). Consequently, it follows that \( h_d(\sigma) = 0 \).
   
\end{example}

To prove our next theorem, which concerns the concentration of entropy on the nonwandering set, we require a lemma and need to revisit the definition of the dynamical ball (see \cite{DDF, entropypartialaction}). We start by recalling the definition of the dynamical ball.

\begin{definition}
    \label{dynamical ball}  
Let $(X, \sigma)$ be a DR system. For each $x \in X$, $n \in \N$, and $\varepsilon > 0$, define the sets

 $$U(x, n, \varepsilon) = \bigcap_{i \in I_n(x)} \sigma^{-i}\left(B(\sigma^i(x), \varepsilon)\right)$$ and 
 $$B(x, n, \varepsilon) = \{y \in X : I_n(y) = I_n(x) \text{ and } d_n(y, x) < \varepsilon\}.$$ 

\end{definition}

\begin{remark}
    The set \( U(x, n, \varepsilon) \), referred to as a "dynamical ball" in \cite[Remark~3.8]{DDF}, is an open subset of \( X \) that contains \( x \). Moreover, if $y\in (x,n,\varepsilon)$, then $I_n(y)\supseteq I_n(x)$. 

    If \( \text{Dom}(\sigma) \) is not clopen, the set \( B(x, n, \varepsilon) \) is not necessarily open. For instance, in the case of a row-finite OTW full shift over an infinite alphabet, the only element \( y \) in the full shift with \( I_n(y) \) equal to \( I_n \) of the empty word is the empty word itself. Thus, \( B(w, n, \varepsilon) = \{w\} \), which is not open.

    However, if \( \text{Dom}(\sigma) \) is clopen, then \( B(x, n, \varepsilon) \) is indeed an open set, as proven in the final item of the next lemma.
\end{remark}

\begin{lemma}
Let \((X, \sigma)\) be a DR system. Fix \(x \in X\) and \(n \in \N \), and let \(\varepsilon > 0\). Then the following holds:

\begin{enumerate}\label{lemmaopenball}
    \item The map \(d_n\), when restricted to \(B(x, n, \varepsilon)\), defines a metric.
    \item If \(x \in \Dom(\sigma^k)\) with \(k \geq n-1\), then \(B(x, n, \varepsilon) = U(x, n, \varepsilon)\), and additionally, \(B(x, n, \varepsilon) \subseteq \Dom(\sigma^{n-1})\).
    \item If \(\Dom(\sigma)\) is clopen, then \(B(x, n, \varepsilon)\) is an open subset in \(X\).
\end{enumerate}
\end{lemma}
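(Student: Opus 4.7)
The plan is to tackle the three items in order, where item (1) is routine, item (2) hinges on the fact that $\Dom(\sigma^i)$ is nested decreasingly in $i$ so $I_n(x)$ is an initial segment of $\{0,\ldots,n-1\}$, and item (3) rests on showing that $\Dom(\sigma^j)$ is clopen for every $j$ under the hypothesis.

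For item (1), I would simply note that any $y,z\in B(x,n,\vep)$ satisfy $I_n(y)=I_n(z)=I_n(x)$, which always contains $0$ (since $\Dom(\sigma^0)=X$), so $d_n\big|_{B(x,n,\vep)\times B(x,n,\vep)}$ is a maximum over the non-empty set $I_n(x)$ of values $d(\sigma^i(\cdot),\sigma^i(\cdot))$. Non-negativity, symmetry, and the triangle inequality then descend from $d$. Definiteness follows because $0\in I_n(x)$: if $d_n(y,z)=0$ then in particular $d(\sigma^0(y),\sigma^0(z))=d(y,z)=0$, so $y=z$.

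For item (2), the key observation is that $\Dom(\sigma^{i+1})\subseteq \Dom(\sigma^i)$, so that $I_n(x)$ is always an initial segment $\{0,1,\ldots,m\}$ (or all of $\{0,\ldots,n-1\}$). Under the assumption $x\in\Dom(\sigma^k)$ with $k\geq n-1$, we get $I_n(x)=\{0,\ldots,n-1\}$. For $B(x,n,\vep)\subseteq U(x,n,\vep)$, any $y$ in the left-hand side has $I_n(y)=I_n(x)$, hence $y\in\Dom(\sigma^i)$ and $\sigma^i(y)\in B(\sigma^i(x),\vep)$ for every $i\in\{0,\ldots,n-1\}$, placing $y$ in each set $\sigma^{-i}(B(\sigma^i(x),\vep))$. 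Conversely, if $y\in U(x,n,\vep)$, then $y\in\Dom(\sigma^i)$ for every $i\in I_n(x)=\{0,\ldots,n-1\}$, so $I_n(y)\supseteq I_n(x)$; since $I_n(y)\subseteq\{0,\ldots,n-1\}=I_n(x)$, equality holds, and the pointwise bounds give $d_n(y,x)<\vep$. The inclusion $B(x,n,\vep)\subseteq \Dom(\sigma^{n-1})$ is immediate from $I_n(y)=\{0,\ldots,n-1\}$.

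For item (3), I would first prove by induction on $j$ that $\Dom(\sigma^j)$ is clopen whenever $\Dom(\sigma)$ is. The inductive step uses $\Dom(\sigma^{j+1})=\sigma^{-1}(\Dom(\sigma^j))\cap\Dom(\sigma)$ and the continuity of $\sigma$ on $\Dom(\sigma)$, together with the fact that preimages of clopen sets under a continuous map between the relevant open subspaces are clopen in the subspace and therefore clopen in $X$ once intersected with the clopen domain. With this in hand, let $k=\max I_n(x)$. If $k\geq n-1$, item (2) already identifies $B(x,n,\vep)$ with the open set $U(x,n,\vep)$. Otherwise $k<n-1$, and I would write
\[
B(x,n,\vep)=U(x,n,\vep)\cap\bigl(X\setminus\Dom(\sigma^{k+1})\bigr),
\]
checking both inclusions: elements of $B(x,n,\vep)$ satisfy $I_n(y)=\{0,\ldots,k\}$ so they lie outside $\Dom(\sigma^{k+1})$ and, by the argument used in item (2), belong to $U(x,n,\vep)$; conversely, if $y\in U(x,n,\vep)\setminus\Dom(\sigma^{k+1})$ then $I_n(y)\supseteq I_n(x)$ (membership in $U$) and $k+1\notin I_n(y)$, forcing $I_n(y)=I_n(x)$ and giving the required distance bound. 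Since $U(x,n,\vep)$ is open and $\Dom(\sigma^{k+1})$ is clopen, $B(x,n,\vep)$ is open. The main subtlety to execute carefully is the induction establishing that each $\Dom(\sigma^j)$ is clopen, since $\sigma$ is only defined on $\Dom(\sigma)$; everything else is a bookkeeping argument about $I_n(x)$ being an initial segment.
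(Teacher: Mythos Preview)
Your proposal is correct and follows essentially the same approach as the paper: item~(1) via the common index set $I_n(x)$, item~(2) via the observation that $I_n(x)=\{0,\ldots,n-1\}$ is maximal so $I_n(y)\supseteq I_n(x)$ forces equality, and item~(3) via the clopenness of each $\Dom(\sigma^j)$ together with the decomposition $B(x,n,\vep)=U(x,n,\vep)\cap\bigl(\Dom(\sigma^k)\setminus\Dom(\sigma^{k+1})\bigr)$ (your version with $X\setminus\Dom(\sigma^{k+1})$ is equivalent since $U(x,n,\vep)\subseteq\Dom(\sigma^k)$). Your write-up is simply more detailed than the paper's, which dispatches items~(1) and~(2) in a sentence each.
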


\begin{proof} 

For the first item, it suffices to prove the triangle inequality, which holds because \( I_n(y) = I_n(x) \) for each \( y \in B(x, n, \varepsilon) \).

The second item follows directly from the definitions of \( U(x, n, \varepsilon) \) and \( B(x, n, \varepsilon) \).

We now prove the third item. Notice that since $\Dom(\sigma)$ is clopen, $\Dom(\sigma^k)$ is also clopen for each $k\in \N$. 

First, suppose that \(I_n(x) = \{0, 1, \dots, n-1\}\). In this case, \(x \in \text{Dom}(\sigma^{n-1})\), and by the second item, \(B(x, n, \varepsilon)\) is open because \(U(x, n, \varepsilon)\) is an open set.

Now, suppose \(I_n(x) = \{0, 1, \dots, k\}\) with \(k < n-1\). Under the assumption that \(\text{Dom}(\sigma)\) is clopen, it follows that \(\text{Dom}(\sigma^k) \setminus \text{Dom}(\sigma^{k+1})\) is an open set in \(X\), since \(\text{Dom}(\sigma^{k+1})\) is closed. Therefore, since
\[
B(x, n, \varepsilon) = \left(\text{Dom}(\sigma^k) \setminus \text{Dom}(\sigma^{k+1})\right) \cap U(x, n, \varepsilon),
\]
we conclude that \(B(x, n, \varepsilon)\) is open, as desired.
\end{proof}

Motivated by a similar result in the context of partial actions (see \cite{entropypartialaction}), we prove below that for DR systems over compact spaces with a clopen domain, the entropy is concentrated in the nonwandering set.

\begin{theorem}\label{entropias iguais} 
Let \((X, \sigma)\) be a DR system on a compact metric space \((X, d)\) with \(\text{Dom}(\sigma)\) clopen, and let \((\Omega, \sigma_{|_\Omega})\) be the restricted DR system. Then \(h_d(\sigma) = h_d(\sigma_{|_\Omega})\).
\end{theorem}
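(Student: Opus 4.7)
The inequality $h_d(\sigma_{|_\Omega}) \le h_d(\sigma)$ is immediate from Proposition~\ref{prop:entropy-restriction}, so the task reduces to proving $h_d(\sigma) \le h_d(\sigma_{|_\Omega})$. Following the strategy of Bowen's classical theorem for homeomorphisms, we plan to bound, for every compact $K \subseteq X$, every closed $F \subseteq \Dom(\sigma^{n-1})$, and every $\varepsilon > 0$, the cardinality of any $(n, 2\varepsilon, \sigma, K \cap F)$-separated set by a polynomial $P(n)$ in $n$ times $ssep(n, \varepsilon, \sigma_{|_\Omega}, K' \cap \Omega)$ for a suitable compact $K' \subseteq X$. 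Taking logarithms, dividing by $n$, passing to the $\limsup$, and then letting $\varepsilon \to 0$ evaporates the polynomial factor and yields $h_d(\sigma, K) \le h_d(\sigma_{|_\Omega})$; a supremum over $K$ then closes the argument.

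\textbf{Setup and itineraries.} Fix $\varepsilon > 0$ and $\delta > 0$ (to be chosen later). Since $\Omega$ is closed (Proposition~\ref{omega_closed}) and $X$ is compact, the open $\delta$-neighborhood $\Omega_\delta := \{x \in X : d(x, \Omega) < \delta\}$ of $\Omega$ has compact complement contained in $\W$. We cover $X \setminus \Omega_\delta$ by finitely many open wandering sets $U_1, \ldots, U_m$ of diameter less than $\varepsilon$; here the clopen hypothesis on $\Dom(\sigma)$ is used to arrange each $U_j$ to lie wholly inside or wholly outside $\Dom(\sigma)$, which will matter in the shadowing step below. Given an $(n, 2\varepsilon, \sigma, K \cap F)$-separated set $A$, associate to each $x \in A$ the itinerary $\tau(x) \in \{0, 1, \ldots, m\}^n$ defined by $\tau_k(x) = 0$ when $\sigma^k(x) \in \Omega_\delta$ and $\tau_k(x) = j$ (least such index) when $\sigma^k(x) \in U_j$. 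Specialising Definition~\ref{wandering} to $k = 0$ yields $\sigma^r(U_j) \cap U_j = \emptyset$ for every $r \ge 1$, so each symbol $j \in \{1, \ldots, m\}$ can appear at most once in any $\tau(x)$; consequently the number of distinct itineraries is bounded by a polynomial $P(n)$ of degree $m$ in $n$.

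\textbf{Class-wise reduction and the main obstacle.} Partition $A = \bigsqcup_\tau A_\tau$ according to itineraries. For distinct $x, y \in A_\tau$, the $2\varepsilon$-separation cannot occur at an index $k$ where $\tau_k \ne 0$, since both $\sigma^k(x)$ and $\sigma^k(y)$ then lie in a single $U_j$ of diameter $< \varepsilon$; hence it is witnessed at some $k$ with $\tau_k = 0$, where $\sigma^k(x), \sigma^k(y) \in \Omega_\delta$ satisfy $d(\sigma^k(x), \sigma^k(y)) > 2\varepsilon$. The difficult step — and the main obstacle — is to convert each $A_\tau$ into a genuine $(n, \varepsilon, \sigma_{|_\Omega}, K' \cap \Omega)$-separated family: the naive nearest-point projection $\pi : \Omega_\delta \to \Omega$ is not $\sigma$-equivariant, so the projected sequence $(\pi(\sigma^k(x)))_k$ is not a $\sigma_{|_\Omega}$-orbit. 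The plan is to choose $\delta$ small relative to a modulus of uniform continuity of $\sigma$ on the compact clopen set $\overline{K} \cap \Dom(\sigma)$ and to exploit the fact that each itinerary has at most $m$ nonzero entries, so one needs to bridge only finitely many short excursions into the wandering sets $U_j$, with total accumulated error controllable thanks to the clopen structure. Once the bound $|A_\tau| \le ssep(n, \varepsilon, \sigma_{|_\Omega}, K' \cap \Omega)$ is in hand for a fixed compact $K'$, summing over itineraries gives the desired estimate $ssep(n, 2\varepsilon, \sigma, K \cap F) \le P(n) \cdot ssep(n, \varepsilon, \sigma_{|_\Omega}, K' \cap \Omega)$, and the theorem follows as outlined in the first paragraph.
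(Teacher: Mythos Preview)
Your itinerary-counting step is correct: covering the compact set $X\setminus\Omega_\delta$ by finitely many wandering open sets $U_1,\dots,U_m$ of small diameter and using $\sigma^r(U_j)\cap U_j=\emptyset$ does force each nonzero symbol to appear at most once, so the number of itineraries is a polynomial of degree $m$ in $n$. The genuine gap is the step you yourself flag as the ``main obstacle'': bounding $|A_\tau|$ by $ssep(n,\varepsilon,\sigma_{|_\Omega},K'\cap\Omega)$. Your proposed remedy --- choose $\delta$ via a modulus of uniform continuity and exploit that there are at most $m$ excursions --- does not work. Uniform continuity controls one application of $\sigma$; under iteration the error compounds, and an orbit that spends a long block of time in $\Omega_\delta$ need not be $d_n$-shadowed by any single $\sigma_{|_\Omega}$-orbit, regardless of how small $\delta$ is (the nearest-point projection to $\Omega$ is not even approximately equivariant). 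The bounded number of excursions is irrelevant here: the difficulty already arises for the all-zero itinerary $\tau\equiv 0$, where there are no excursions at all. Without shadowing you cannot pass from ``$(n,2\varepsilon)$-separated with all separation witnessed inside $\Omega_\delta$'' to any separated-set count for $(\Omega,\sigma_{|_\Omega})$, so the chain of inequalities breaks.

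The paper's proof sidesteps this shadowing issue entirely by coding at the $n$-block time scale rather than step by step. One fixes $n$ and $\varepsilon$, takes a minimal $(n,\varepsilon)$-generating set $A$ for $\Omega\cap\Dom(\sigma^{n-1})$ (so $|A|=sgen(n,\varepsilon,\sigma_{|_\Omega},\Omega)$ already), covers the remaining wandering part of $\Dom(\sigma^{n-1})$ by a finite $(n,\beta)$-generating set $B$ with $\beta$ chosen so that the dynamical balls $U(y,n,\beta)$ are themselves wandering, and covers $X\setminus\Dom(\sigma^{n-1})$ by a finite set $C$. For large $m\le nl$ one records, for each $j<l$, which ball of $A\cup B\cup C$ contains $\sigma^{jn}(x)$. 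The wandering property forces each element of $B$ to appear in at most one block (and $C$ in at most one, since leaving $\Dom(\sigma^{n-1})$ is terminal), so the total count is $O\!\big(l^{|B|+1}(|A|+1)^l\big)$; dividing $\log$ of this by $m\approx nl$ yields $\frac{1}{n}\log(|A|+1)$, which is exactly what links back to $h_d(\sigma_{|_\Omega})$. The point is that the ``$\Omega$-part'' of the alphabet is $A$ itself, so no shadowing is ever needed. If you rework your argument at the $n$-block scale with this alphabet, the obstacle disappears.
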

\begin{proof}

From Corollary~\ref{restricted systems} and Proposition~\ref{prop:entropy-restriction}, we have $h_d(\sigma_{|_\Omega})\leq h_d(\sigma)$. We prove the reverse inequality below.

Let \(n \in \mathbb{N}\) and \(\varepsilon > 0\). Define \(F = \text{Dom}(\sigma^{n-1})\), which is closed in \(X\) since \(\text{Dom}(\sigma)\) is closed and \(\sigma\) is continuous.

Let \(A \subseteq \Omega \cap F\) be an \((n, \varepsilon, \sigma, \Omega \cap F)\)-generating set of minimum cardinality. By Proposition~\ref{span and gen} and \cite[Lemma 3.7]{DDF}, $A$ is a finite set.

For each \(x \in A\), let \(B(x, n, \varepsilon)\) be the set given in Definition~\ref{dynamical ball}.

\vspace{0.5 cm}

\textbf{Claim 1:} The collection \(\{B(x,n,\varepsilon)\}_{x \in A}\) is an open cover of \(\Omega \cap F\).

\vspace{0.5 cm}

By Lemma \ref{lemmaopenball}, \(B(x,n,\varepsilon)\) is open in \(X\). To show that \(\{B(x,n,\varepsilon)\}_{x \in A}\) covers \(\Omega \cap F\), let \(y \in \Omega \cap F\). Since \(A\) is an \((n,\varepsilon,\sigma, \Omega \cap F)\)-generating set, there exists some \(z \in A\) such that \(d_n(y,z) < \varepsilon\). Given that \(F = \Dom(\sigma^{n-1})\) and \(y, z \in F\), we have \(I_n(y) = I_n(z)\). Therefore, \(y \in B(z,n,\varepsilon)\), proving Claim 1.$\square$

\hspace{0.5pc}

Now, let $U=\bigcup\limits_{x\in A}B(x,n,\varepsilon)$, which is an open set containing $F\cap \Omega$. Notice that $A$ is an $(n,\varepsilon, \sigma, U)$-generating set. 

\vspace{0.5 cm}
\textbf{Claim 2:} For each $N\in \N$, there exists $0<\beta\leq \varepsilon$ such that 
$$
\sigma^m(U(x,N,\beta))\cap \sigma^{-k}(\sigma^k(U(x,N,\beta)))=\emptyset
$$ for each $m\in\N$, $k\in \Z$, and $x\in F\setminus U$. 
\vspace{0.5 cm}

To prove this claim, suppose for contradiction that for each \( \beta > 0 \), there exist \( x \in F \setminus U \), \( i \in \mathbb{N} \), and \( j \in \mathbb{Z} \) such that
\(
\sigma^i(U(x, N, \beta)) \cap \sigma^{-j}(\sigma^j(U(x, N, \beta))) \neq \emptyset.
\) 
For each \( m \in \mathbb{N} \), choose \( x_m \in F \setminus U \), \( i_m \in \mathbb{N} \), and \( j_m \in \mathbb{Z} \) satisfying
\[
\sigma^{i_m}\big(U(x_m, N, \tfrac{1}{m})\big) \cap \sigma^{-j_m}\big(\sigma^{j_m}(U(x_m, N, \tfrac{1}{m}))\big) \neq \emptyset.
\]

Since \( F \setminus U \) is compact, there exists a subsequence \( (x_{m_k})_{k \in \mathbb{N}} \) that converges to some point \( z \in F \setminus U \).
Let \( V \subseteq X \) be an open neighborhood of \( z \). Then, there exists \( k_0 \in \mathbb{N} \) such that
\[
B\big(x_{m_{k_0}}, \tfrac{1}{m_{k_0}}\big) \subseteq V.
\]
As
\(
U\big(x_{m_{k_0}}, N, \tfrac{1}{m_{k_0}}\big) \subseteq B\big(x_{m_{k_0}}, \tfrac{1}{m_{k_0}}\big),
\)
we have
\(
U\big(x_{m_{k_0}}, N, \tfrac{1}{m_{k_0}}\big) \subseteq V.
\)

From our assumption, we obtain that
\[
\sigma^{i_{m_{k_0}}}\big(V\big) \cap \sigma^{-j_{m_{k_0}}}\big(\sigma^{j_{m_{k_0}}}(V)\big) \neq \emptyset.
\]
This implies that \( z \) is a non-wandering point, which contradicts the fact that all points in \( F \setminus U \) are wandering.
Therefore, Claim~2 holds. \( \square \)

\vspace{0.5pc}

Let $\beta$ be as in Claim 2 (for $N=n$), and let $B$ be an $(n,\beta, \sigma, F\setminus U)$-generating set of minimum cardinality. By Proposition \ref{span and gen} and \cite[Lemma 3.7]{DDF}, \(B\) is finite.

\vspace{0.5cm}

\textbf{Claim 3:} The collection \(\{B(x, n, \beta)\}_{x \in B}\) is an open cover of \(F \setminus U\).

\vspace{0.5cm}

The proof of this claim follows the same reasoning as in Claim 1 and is left to the reader. \(\square\)

\vspace{0.5cm}

Let \( V = \bigcup\limits_{x \in B} B(x, n, \beta) \). By the second item of Lemma~\ref{lemmaopenball}, we have \( B(x, n, \beta) \subseteq \Dom(\sigma^{n-1}) \) for each \( x \in B \). Similarly, \( B(x, n, \varepsilon) \subseteq \Dom(\sigma^{n-1}) \) for each \( x \in A \). Therefore, \( U \cup V \subseteq \Dom(\sigma^{n-1}) = F \). From Claims 1 and 3, we conclude that \( U \cup V = F \).

Since \( F = U \cup V \) is an open set, \( X \setminus F \) is compact. Therefore, there exists a finite set \( C \subseteq X \setminus F \) such that \( X \setminus F \subseteq \bigcup\limits_{x \in C} B(x, n, \varepsilon) \). This holds because each \( B(x, n, \varepsilon) \) is an open set.

Notice that \( X = \bigcup\limits_{x \in A \cup C} B(x, n, \varepsilon) \cup \bigcup\limits_{y \in B} B(y, n, \beta) \), so each \( z \in X \) belongs to some \( B(x, n, \varepsilon) \) with \( x \in A \cup C \), or \( z \) belongs to some \( B(y, n, \beta) \) with \( y \in B \).

Let \( A = \{u_h : h \in H\} \), \( B = \{v_i : i \in I\} \), and \( C = \{w_j : j \in J\} \), where \( H \), \( I \), and \( J \) are finite index sets. Define \( D = A \cup B \cup C \). Fix an \( l \in \mathbb{N} \). We now define a map
\[
\varphi_l: F \rightarrow (D \cup \{\star\})^l
\]
(where \(\star\) is just a symbol) by \(\varphi_l(x) = (y_0, y_1, \ldots, y_{l-1})\), where each \( y_k \) is defined as follows:

\begin{itemize}
\item To define \( y_0 \), notice that since \( x \in F \), there exists some \( u_{h_0} \in A \) such that \( x \in B(u_{h_0}, n, \varepsilon) \), in which case we define \( y_0 = u_{h_0} \), or there exists some \( v_{i_0} \in B \) such that \( x \in B(v_{i_0}, n, \beta) \), in which case we define \( y_0 = v_{i_0} \).

\item For each \( k \in \{1, \ldots, l-1\} \) with \( x \notin \Dom(\sigma^{kn}) \), define \( y_k = \star \).

\item For each \( k \in \{1, \ldots, l-1\} \) with \( x \in \Dom(\sigma^{kn}) \):
  \begin{itemize}
  \item If \( \sigma^{kn}(x) \in F \), then there exists some \( u_{h_k} \in A \) such that \( \sigma^{kn}(x) \in B(u_{h_k}, n, \varepsilon) \), in which case we define \( y_k = u_{h_k} \), or there exists some \( v_{i_k} \in B \) such that \( \sigma^{kn}(x) \in B(v_{i_k}, n, \beta) \), in which case we define \( y_k = v_{i_k} \).
  \item If \( \sigma^{kn}(x) \notin F \), then there exists some \( w_{j_k} \in C \) such that \( \sigma^{kn}(x) \in B(w_{j_k}, n, \varepsilon) \), in which case we define \( y_k = w_{j_k} \).
  \end{itemize}
\end{itemize}

Notice that the map \(\varphi_l\) may not be unique, as each \( y_k \) depends on the choice of \( u_h \), \( v_i \), and \( w_j \).

\vspace{0.5 cm}
\textbf{Claim 4:} For each \( x \in F \), any \( y \in B \) appears at most once in \(\varphi_l(x)\).
\vspace{0.5 cm}

Suppose that there exists an \( x \in F \) such that some \( y \in B \) appears twice in \(\varphi_l(x) = (y_0, y_1, \ldots, y_{l-1})\), and assume \( y_p = y = y_q \) with \( p < q \). Notice that in this case, \( \sigma^{pn}(x) \in B(y, n, \beta) = U(y, n, \beta) \), where the equality follows from the second item of Lemma~\ref{lemmaopenball}. Similarly, we can show that \( \sigma^{qn}(x) \in U(y, n, \beta) \).

Now observe that \( \sigma^{qn-pn}(\sigma^{pn}(x)) = \sigma^{qn}(x) \), so \( \sigma^{qn}(x) \in \sigma^{qn-pn}(U(y, n, \beta)) \cap U(y, n, \beta) \). However, by Claim 2, this is impossible. This contradiction proves Claim 4. \(\square\)

\vspace{0.5 cm}

\textbf{Claim 5:} For each \( x \in F \), at most one element of \( C \) appears in \(\varphi_l(x)\).

\vspace{0.5 cm}

To see that this claim is true, let \( x \in F \) and suppose that some element of \( C \) appears in \(\varphi_l(x) = (y_0, y_1, \ldots, y_{l-1})\). Let \( p \) be the smallest index such that \( y_p \in C \). If \( p = l-1 \), we are done. Therefore, suppose \( p < l-1 \). By the definition of \(\varphi_l\), it follows that \( x \in \Dom(\sigma^{pn}) \) and \( \sigma^{pn}(x) \notin F \). In particular, \( \sigma^{pn}(x) \notin \Dom(\sigma^n) \) (since \( \Dom(\sigma^n) \subseteq \Dom(\sigma^{n-1}) = F \)), so \( x \notin \Dom(\sigma^{(p+1)n}) \), and therefore \( y_{p+1} = \star \). By the definition of \(\varphi_l\), it follows that \( y_k = \star \) for each \( k \geq p+1 \). Hence, the unique element of \( C \) appearing in \(\varphi_l(x)\) is \( y_p \).
\(\square\)

\vspace{0.5 cm}
\textbf{Claim 6:} Let $l\in \N$, $0<m\leq nl$, and let $E$ be an $(m, 2\varepsilon, \sigma, F)$-separated set. Then $\varphi_l$ is injective on $E$.

\vspace{0.5 cm}

Let \( x, x' \in E \) such that \( \varphi_l(x) = \varphi_l(x') \), and denote \( \varphi_l(x) = (y_0, \dots, y_{l-1}) \). For any \( k \in \{0, \dots, l-1\} \), since \( \varphi_l(x) = \varphi_l(x') \), we have that \( x \in \text{Dom}(\sigma^{kn}) \) if and only if \( x' \in \text{Dom}(\sigma^{kn}) \). Furthermore, if \( x \in \text{Dom}(\sigma^{kn}) \), then \( I_n(\sigma^{kn}(x)) = I_n(\sigma^{kn}(x')) \). Assuming \( x \in \text{Dom}(\sigma^{kn}) \), by the definition of \( \varphi_l \), we obtain that \( \sigma^{kn}(x), \sigma^{kn}(x') \in B(y_k, n, \varepsilon) \) when \( y_k \in A \cup C \), or \( \sigma^{kn}(x), \sigma^{kn}(x') \in B(y_k, n, \beta) \) when \( y_k \in B \). Since \( \beta \leq \varepsilon \), it follows from the first item of Lemma~\ref{lemmaopenball} that \( d_n(\sigma^{kn}(x), \sigma^{kn}(x')) < 2\varepsilon \). Consequently, \( d(\sigma^{i+kn}(x), \sigma^{i+kn}(x')) < 2\varepsilon \) for each \( i \in I_n(\sigma^{kn}(x)) = I_n(\sigma^{kn}(x')) \).

Now, let \( 0 < m \leq nl \) and let \( p \in \{0, \dots, l-1\} \) be the greatest index such that \( pn \leq m \). We will show that \( d_m(x, x') < 2\varepsilon \).

First, suppose that \( x \in \text{Dom}(\sigma^{pn}) \). Then, in particular, \( x \in \text{Dom}(\sigma^{kn}) \) for each \( k \leq p \). From the previous discussion, we have \( d(\sigma^{i+kn}(x), \sigma^{i+kn}(x')) < 2\varepsilon \) for each \( i \in I_n(\sigma^{kn}(x)) = I_n(\sigma^{kn}(x')) \) and for each \( k \in \{0, \dots, p\} \). Since \( x \in \text{Dom}(\sigma^{pn}) \), it follows that \( I_n(\sigma^{kn}(x)) = \{0, 1, \dots, n-1\} = I_n(\sigma^{kn}(x')) \) for each \( k < p \). Therefore, we have \( d(\sigma^j(x), \sigma^j(x')) < 2\varepsilon \) for each \( j \in \{0, 1, \dots, pn-1\} \). Moreover, from the previous discussion, we have \( d(\sigma^{i+pn}(x), \sigma^{i+pn}(x')) < 2\varepsilon \) for each \( i \in I_n(\sigma^{pn}(x)) = I_n(\sigma^{pn}(x')) \). Notice that \( I_m(x) = \{0, 1, \dots, pn-1\} \cup \{i+pn : i \in I_n(\sigma^{pn}(x))\} = I_m(x') \), and hence we conclude that \( d_m(x, x') < 2\varepsilon \).

We now consider the case where \( x \notin \text{Dom}(\sigma^{pn}) \). In this setting, let \( q \in \{0, \dots, l-1\} \) be the greatest index such that \( x \in \text{Dom}(\sigma^{qn}) \) (note that \( q \) could be zero). Then, \( x \in \text{Dom}(\sigma^{kn}) \) for each \( k \leq q \). The proof that \( d_m(x, x') < 2\varepsilon \) now follows in the same way as in the previous paragraph.

Thus, we obtain that \( d_m(x, x') < 2\varepsilon \), which is a contradiction, as \( E \) is an \((m, 2\varepsilon, \sigma, F)\)-separated set. Therefore, \( x = x' \), implying that \( \varphi \) is injective on \( E \).  $\square$

\vspace{0.5 cm}
{\it {\bf Claim 7:} Let \( m \in \mathbb{N} \) be such that \( 0 < m \leq nl \). Define \( p = \#A \), \( q = \#B \), and \( r = \#C \), and let \( E \) be an \((m, 2\varepsilon, \sigma, F)\)-separated set of maximum cardinality in \( F \). Then, it follows that \( \#(\varphi_l(E)) \leq (q + 2)! \cdot \left( (q + 1)r + 1 \right) l^{q + 1}(p + 1)^l \).}
\vspace{0.5 cm}

For each \( 0 \leq j \), let \( I_j \) denote the subset of elements \( \varphi_l(x) = (y_0, \dots, y_{l-1}) \) in \( \varphi_l(E) \) with exactly \( j \) entries in \( B \cup C \). Note that for \( j > q+1 \), it follows from Claims 4 and 5 that \( I_j = \emptyset \). For each \( j \in \{0, \dots, q+1\} \), let \( M_j \subseteq I_j \) be the set of elements \( \varphi_l(x) = (y_0, \dots, y_{l-1}) \) containing no elements of \( C \), and let \( N_j \subseteq I_j \) be the set of elements \( \varphi_l(x) = (y_0, \dots, y_{l-1}) \) containing exactly one element of \( C \). From Claim 5, we have \( M_j \cup N_j = I_j \).

Observe that there are \( \frac{q!}{j!(q-j)!} \) ways to choose \( j \) elements from \( B \), and there are \( \frac{l!}{(l-j)!} \) ways to arrange each selection. Since each element \( \varphi_l(x) = (y_0, \dots, y_{l-1}) \) consists of \( l \) elements, there are at most \( (p+1)^{l-j} \leq (p+1)^l \) ways to choose the remaining \( l-j \) elements (where \( p+1 \) refers to the \( p \) elements in \( A \) and the element \( \star \)). Therefore,
\[
\# M_j \leq \frac{q!}{j!(q-j)!} \cdot \frac{l!}{(l-j)!} \cdot (p+1)^l.
\]

Similarly, there are \( \frac{q!}{(j-1)!(q-(j-1))!} \) ways to choose \( j-1 \) elements from \( B \), and there are \( \frac{l!}{(l-(j-1))!} \) ways to arrange each choice. Additionally, there are at most \( (p+1)^{l-j} \leq (p+1)^l \) ways to choose \( l-j \) elements from \( A \). Since each element of \( N_j \) consists of \( j-1 \) elements from \( B \), \( l-j \) elements from \( A \), and exactly one element from \( C \) (with this element being the last one appearing before the first \( \star \)), we have
\[
\#N_j \leq \frac{q!}{(j-1)!(q-(j-1))!} \cdot \frac{l!}{(l-(j-1))!} \cdot (p+1)^l \cdot r.
\]

Now, observe that
\[
\frac{1}{(j-1)!(q-(j-1))!(l-(j-1))!} = \frac{j}{j!(q-(j-1))!(l-(j-1))!} \leq \frac{j}{j!(q-j)!(l-j)!},
\]
which implies that \( \# N_j \leq j \cdot r \cdot \# M_j \).

Therefore, we have
\[
\#I_j = \# M_j + \# N_j \leq (jr+1) \cdot \frac{q!}{j!(q-j)!} \cdot \frac{l!}{(l-j)!} \cdot (p+1)^l\] \[ \leq ((q+1)r+1) \cdot \frac{q!}{j!(q-j)!} \cdot \frac{l!}{(l-j)!} \cdot (p+1)^l
\]
for each \( j \in \{0, \dots, q+1\} \).

Since \( \frac{q!}{j!(q-j)!} \leq q! \) and \( \frac{l!}{(l-j)!} \leq l^j \leq l^{q+1} \), we conclude that
\[
\# I_j \leq ((q+1)r+1) \cdot q! \cdot l^{q+1} \cdot (p+1)^l
\]
for each \( j \in \{0, \dots, q+1\} \). Consequently,
\[
\# \varphi_l(E) = \sum_{j=0}^{q+1} \# I_j \leq (q+2) \cdot ((q+1)r+1) \cdot q! \cdot l^{q+1} \cdot (p+1)^l\] \[ \leq (q+2)! \cdot ((q+1)r+1) \cdot l^{q+1} \cdot (p+1)^l.
\]
This completes the proof of Claim 7.
 $\square$

\vspace{0.5 cm}

Notice that by Claim 6, \(\varphi_l\) is injective on \(E\), so \(\# E = \#\varphi_l(E)\), and therefore \(\#E \leq (q+2)!((q+1)r+1)l^{q+1}(p+1)^l\).

Now fix \(n > 0\), let \(p\), \(q\), and \(r\) be as in Claim 7, and let \(m > 0\). Choose \(l_m \in \mathbb{N}\) such that \(n(l_m - 1) < m \leq nl_m\), and let \(E\) be as in Claim 7. Since \(F = \text{Dom}(\sigma^{n-1})\) is a closed set, we have
\[
ssep(m, 2\varepsilon, \sigma, X) = sep(m, 2\varepsilon, \sigma, F),
\]
and thus
$$
\renewcommand{\arraystretch}{2.3}
\begin{array}{rcl} 
\displaystyle \frac{ssep(m, 2\varepsilon, \sigma, X)}{m} & = & \displaystyle \frac{sep(m, 2\varepsilon, \sigma, F)}{m} = \frac{\log(\# E)}{m} \leq \frac{\log((q+2)!((q+1)r+1)l_m^{q+1}(p+1)^{l_m})}{m} \\
 & \leq & \displaystyle \frac{\log((q+2)!((q+1)r+1)l_m^{q+1}(p+1)^{l_m})}{n(l_m-1)} \\
 & = & \frac{1}{n(l_m-1)}\left(\log((q+2)!((q+1)r+1)) + (q+1)\log(l_m) + l_m\log(p+1)\right).
\end{array}
$$

Therefore, as \(m \rightarrow \infty\) and \(l_m \rightarrow \infty\), we have
\[
h_{2\varepsilon}(\sigma, X, d) = \limsup\limits_{m\rightarrow \infty} \frac{1}{m}ssep(m, 2\varepsilon, \sigma, X) \leq \frac{1}{n} \log(p+1).
\]
Since \(p+1 \leq 2p\), it follows that
\[
h_{2\varepsilon}(\sigma, X, d) \leq \frac{\log(2)}{n} + \frac{\log(p)}{n}
\]
for each \(n \in \mathbb{N}\).

Notice that \(p = sgen(n, \varepsilon, \sigma_\omega, \Omega)\), and from Proposition~\ref{span and gen} we have \(sgen(n, \varepsilon, \sigma_{|_\Omega}, \Omega) \leq ssep(n, \frac{\varepsilon}{2}, \sigma_{|_\Omega}, \Omega)\). Therefore,
\[
h_{2\varepsilon}(\sigma, X, d) \leq \limsup\limits_{n \rightarrow \infty} \left\{ \frac{\log(2)}{n} + \frac{\log(ssep(n, \frac{\varepsilon}{2}, \sigma_{|_\Omega}, \Omega))}{n} \right\} = h_{\frac{\varepsilon}{2}}(\sigma_{|_\Omega}, \Omega, d).
\]
Thus, we conclude that \(h_{2\varepsilon}(\sigma, X, d) \leq h_{\frac{\varepsilon}{2}}(\sigma_{|_\Omega}, \Omega, d)\) for each \(\varepsilon > 0\), and consequently, \(h_d(\sigma) \leq h_d(\sigma_{|_\Omega})\), which proves the theorem.

\end{proof}

\begin{example}

Consider Example~\ref{Cantor}, where \(X\) is the Cantor set on the real line, and \(\sigma\) is the local homeomorphism defined by multiplication by 3, with \(\Dom(\sigma) = [0,\frac{1}{3}] \cap X\). Notice that \(\Dom(\sigma)\) is clopen. Following a similar approach as in Example \ref{exemplo_fecho_de_W}, it can be shown that \(\Omega = \{0\}\). By applying Theorem \ref{entropias iguais}, we then conclude that \(h_d(\sigma_{|_\Omega}) = 0\).

\end{example}

 \begin{example} Let \( X = \left[-1, -\frac{1}{2}\right] \cup [0, 1] \), and define \(\sigma: X \rightarrow X\) by \(\sigma(x) = x^2\). In this case, \(\Omega = \{-1, 0, 1\}\) and \(\W = \left(-1, -\frac{1}{2}\right) \cup (0, 1)\). Since \(h_d(\sigma_{|_\Omega}) = 0\), it follows from Theorem \ref{entropias iguais} that \(h(\sigma) = 0\).

\end{example}

\begin{remark}
    
Let $(\R, \sigma)$ be the DR system described in Example~\ref{entropias diferentes}. Notice that in this case, \(h_d(\sigma) \neq h_d(\sigma_{|_\Omega})\); specifically, \(h_d(\sigma) = \log(2)\) and \(h_d(\sigma_{|_\Omega}) = 0\). This is an example where $\Dom(\sigma)$ is clopen, but $\R$ is not compact. Therefore, the assumption that \(X\) is compact cannot be omitted in Theorem~\ref{entropias iguais}.
    
\end{remark}

\begin{corollary}
    Let \((X, \sigma)\) be a compact DR system with \(\Dom(\sigma)\) clopen, and recall that \(\Omega\) and \(\W\) denote the nonwandering and wandering sets, respectively. Then \(h_d(\sigma_{|\overline{\W}}) \leq h_d(\sigma_{|_\Omega})\).
\end{corollary}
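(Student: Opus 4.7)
The proof is essentially a one-line synthesis of the two main theorems of this section, so the plan is to simply combine them without any new technical work. Specifically, Theorem~\ref{entropias wandering e nonwandering} asserts, with no compactness or clopen hypothesis, that
\[
h_d(\sigma) = \max\bigl\{h_d(\sigma_{|_\Omega}),\, h_d(\sigma_{|_{\overline{\W}}})\bigr\},
\]
while Theorem~\ref{entropias iguais}, which does use compactness of $X$ and the assumption that $\Dom(\sigma)$ is clopen, gives $h_d(\sigma) = h_d(\sigma_{|_\Omega})$.

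My plan is first to invoke Theorem~\ref{entropias iguais} (whose hypotheses match those of the corollary) to substitute $h_d(\sigma_{|_\Omega})$ for $h_d(\sigma)$ in the identity of Theorem~\ref{entropias wandering e nonwandering}. This yields
\[
h_d(\sigma_{|_\Omega}) = \max\bigl\{h_d(\sigma_{|_\Omega}),\, h_d(\sigma_{|_{\overline{\W}}})\bigr\},
\]
from which the conclusion $h_d(\sigma_{|_{\overline{\W}}}) \leq h_d(\sigma_{|_\Omega})$ follows at once by the definition of maximum. Before writing the two displayed equations, I would briefly remark that Corollary~\ref{restricted systems} justifies treating $(\Omega, \sigma_{|_\Omega})$ and $(\overline{\W}, \sigma_{|_{\overline{\W}}})$ as bona fide DR systems so that their entropies are defined.

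There is no genuine obstacle here: both ingredients are already proved in the paper, so the only risk is a notational mismatch (for instance making sure that $\overline{\W}$ is taken in $X$ and not in some subspace, and that the metric used on the restricted system is the restriction of $d$, as consistently assumed throughout Section~\ref{wanderentropy}). Given the setup, the argument is immediate and the corollary just records the consequence that in the compact, clopen-domain setting all of the entropy is not only concentrated on $\Omega$ but in fact dominates that of the closure of the wandering set.
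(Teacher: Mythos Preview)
Your proposal is correct and matches the paper's own proof, which simply states that the result follows from Theorems~\ref{entropias iguais} and~\ref{entropias wandering e nonwandering}. Your additional remark invoking Corollary~\ref{restricted systems} is appropriate but not strictly needed, since both theorems already presuppose that the restricted systems are DR systems.
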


\begin{proof}
    This result follows from Theorems~\ref{entropias iguais} and \ref{entropias wandering e nonwandering}.
\end{proof}

\begin{example}
Let \( X = \prod \{0,1\} \) with the usual metric \( d(x,y) = \frac{1}{2^i} \), where \( i \) is the first position where the sequences \( x \) and \( y \) differ. Clearly, $X$ is compact. We define a DR system \( (X,f) \) as follows:

Decompose \( X \) as:
\[
X = [1] \sqcup [01] \sqcup [001] \sqcup [0001] \sqcup [00001] \sqcup \cdots \sqcup \{0^\infty\},
\]
where \( [a] \) denotes the cylinder set of all sequences that begin with \( a \). Let  \( \text{Dom}(f) = [0] \), and let \( \sigma \) denote the shift map. We define the map \( f \) as follows:
\[
f(0^\infty) = 0^\infty,
\]
\[
f|_{[0^{2n}1]}(x) = \sigma^2(x), \quad \text{for } n=1, 2, 3, \ldots,
\]
and
\[
f|_{[0^{2n+1}1]}(x) = 00x, \quad \text{for } n=0, 1, 2, \ldots.
\]

It is clear that \( f \) is an injective local homeomorphism on the clopen set \( \text{Dom}(f) \). Therefore, \( (X, f) \) is a DR system. Moreover, by Item 4 of Lemma~\ref{subset lemma}, we have that \( f^{-k}(f^k(U)) \subseteq U \) for all \( k \in \mathbb{Z} \) and for every \( U \subseteq X \).

Thus, if \( x \in [0^k1] \), then \( f^m([0^k1]) \cap f^{-k}(f^k([0^k1])) = \emptyset \) for all \( m > 0 \) and \( k \in \mathbb{Z} \). Hence, \( [0] \setminus \{0^\infty\} \subseteq \mathcal{W} \). Clearly, \( 0^\infty \in \Omega \), since \( 0^\infty \) is a fixed point.

Therefore, by Theorem~\ref{entropias iguais}, we conclude that \( h_d(f) = h_d(f|_\Omega) = 0 \).

\end{example}

\bibliographystyle{abbrv}
\bibliography{ref}

(Daniel Gon\c{c}alves, Danilo Royer and Felipe Augusto Tasca) 
 { Departamento de Matem\'atica, UNIVERSIDADE FEDERAL DE SANTA CATARINA, 88040-970, Florian\'opolis SC, Brazil.}
 
{\textit{Email Address:}}\texttt{
\href{mailto:daemig@gmail.com}{daemig@gmail.com}, \href{mailto:danilo.royer@ufsc.br}{danilo.royer@ufsc.br}, \href{mailto:tasca.felipe@gmail.com}{tasca.felipe@gmail.com}.}

\end{document}